\newtheorem{thm}{Theorem}[section]
\newtheorem{lem}[thm]{Lemma}
\newtheorem{prop}[thm]{Proposition}
\newtheorem{rem}[thm]{Remark}
\newtheorem{defn}[thm]{Definition}
\newcommand{\R}{\mathbb R}
\newcommand{\N}{\mathbb N}
\newcommand{\n}{\nabla}
\newcommand{\D}{\hbox{D}}
\begin{document}

\parindent 0pc
\parskip 6pt
\overfullrule=0pt

\title[Second order regularity]{Second order regularity for solutions to anisotropic degenerate elliptic equations}

\author{Daniel Baratta$^{*}$, 
Luigi Muglia$^{*}$ and Domenico Vuono$^{*}$}
\address{$^{*}$Dipartimento di Matematica e Informatica, UNICAL, Ponte Pietro  Bucci 31B, 87036 Arcavacata di Rende, Cosenza, Italy}
\email{daniel.baratta@unical.it}
\email{muglia@mat.unical.it}
\email{domenico.vuono@unical.it}

\keywords{Anisotropic Elliptic Equations, Second-order estimates, Regularity}

\date{\today}

\subjclass[2020]{35J70, 35B65}

\maketitle

\date{\today}

\begin{abstract}
We consider solutions to degenerate anisotropic elliptic equations in order to study their regularity. In particular we establish second-order estimates and enclose regularity results for the stress field. All our results are new even in the euclidean case.
\end{abstract}

\section{Introduction}
The goal of this paper is to investigate the second order regularity properties of solutions of degenerate elliptic problems in a possibly anisotropic medium. 

Let $\Omega\subset \mathbb{R}^n$ be a domain and, for some $\alpha\in (0,1)$, let $A\in C_{loc}^{1,\alpha}(0,+\infty)$ and $H\in C_{loc}^{2,\alpha}(\mathbb{R}^n\setminus\{0\})$ be representing a Finsler norm (see Section \ref{preliminari} for details). Let us consider a source term $f$ satisfying
\begin{itemize}
\item[$(H_f)$] the source term $f\in W_{loc}^{1,\frac{N}{N-\gamma-s}}(\Omega)\cap C_{loc}^{0,\alpha}(\Omega)$ where $s\in (0,N-\gamma)$ and $\gamma<N-2$ if $N>2$ and $\gamma=0$ if $N=2$. If $\gamma=0$,  we consider $f\in  W^{1,1}_{loc}(\Omega)\cap C_{loc}^{0,\alpha}(\Omega)$.  
\end{itemize}
We are interested in the second order regularity of solutions to
\begin{equation} \label{eq:forte}
    -\operatorname{div}(A(H(\nabla u)) H(\nabla u)\nabla H(\nabla u)) = f(x), \qquad \text{in } \Omega.
\end{equation} 
In the isotropic case, i.e., when $H$ is the classical Euclidean norm $H(\xi)=|\xi|$, our equation reduces to the well known degenerate equation based on the Uhlenbeck structure,
\begin{equation}\label{isotro}
 -\operatorname{div} (A(|\nabla u|)\nabla u)=f(x), \qquad \text{in } \Omega,
\end{equation}
where
\begin{equation*}
 -1 < \inf_{t>0} \frac{tA'(t)}{A(t)} := m_{A} \le M_{A} := \sup_{t>0} \frac{tA'(t)}{A(t)} < + \infty.
\end{equation*}
By choosing $A(t)=t^{p-2}$ ($p>1$), \eqref{isotro} encompasses the inhomogeneous $p$-Laplace equation
\begin{equation}\label{lapl}
 -\Delta_pu:=-\operatorname{div} (|\nabla u|^{p-2}\nabla u)=f(x), \qquad \text{in } \Omega.
\end{equation}
As a result, we immediately recognize that problem \eqref{eq:forte} reduces to the Finsler $p$-Laplacian problem
\begin{equation}\label{laplani}
 -\Delta_p^Hu:=-\operatorname{div} (H(\nabla u)^{p-1}\nabla H(\nabla u))=f(x), \qquad \text{in } \Omega,
\end{equation}
whenever $A(t)=t^{p-2}$ ($p>1$).\\
Recalling that for the $p$-Laplace equation, solutions are understood in a weak meaning, it is unsurprising that a similar meaning must be applied to solutions of \eqref{eq:forte}. To properly define weak solutions to \eqref{eq:forte}, we refer the reader to Section \ref{preliminari} .

The Calderón-Zygmund theory in the quasilinear context is a complex and challenging subject, as discussed in \cite{1,lars1,6,7,9,lars2,12,13,17, 16,23,24}. Recent significant results are presented in \cite{DM,DM2,DM3}. 

To the best of our knowledge, second-order regularity for equations with Uhlenbeck structure is not completely understood including the case of problem \eqref{eq:forte}.\\
By standard regularity results (see \cite{CFV17, DiB, GT, Ul, Li1, Li2, To}) weak solutions to equation \eqref{eq:forte} belong to $C_{loc}^{1,\alpha}(\Omega)\cap C^2(\Omega\setminus Z_u)$ where $Z_u$ denotes the set where the gradient vanishes.

In particular,  $W^{1,p}(\Omega)\cap L^{\infty}(\Omega)$ can be chosen as a natural space for the existence of solutions under a suitable set of assumptions.  Within this setting  Cozzi et al. in \cite{CFV17} proved that $u\in C^{1,\alpha}(\Omega)\cap C^3(\{|\nabla u|\neq 0\})$. 

Subsequently,  Castorina, Riey and Sciunzi \cite{CaRiSc} proved that if $u\in C^1(\overline{\Omega})$ is a weak solution of \eqref{eq:forte} then
\begin{itemize}
 \item if $p\in(1,3)$, $u\in W^{2,2}(\Omega)$;
 \item if $p\geq 3$ and $f$ is positive then $u\in W^{2,q}(\Omega)$ where $q<\frac{p-1}{p-2}$.
\end{itemize}
This holds under assumptions similar to those in \cite{CFV17} but defining and working on the \emph{linearized problem}.
 
The hypothesis  $u\in C^1(\overline{\Omega})$ is justified by assuming that $\Omega$ is smooth, which allows for the application of Lieberman's results in \cite{Li1}.

In \cite{ACF}, the authors proved that the stress field $H(\nabla u)^{p-1}\nabla H(\nabla u)\in W^{1,2}_{loc}(\Omega)$ if $u\in W^{1,p}_{loc}(\Omega)$ is a weak solution of \eqref{laplani} and $f\in L^q_{loc}(\Omega)$ for a suitable choice of $q$.  \\
For $p\in (1,2)$  and $f\in L^{r}_{loc}(\Omega)$ ($r>n$)\footnote{This result is a special case of a more general one  involving a source term 
$f$ that satisfies weaker integrability conditions.},  they obtain additional regularity, namely $u\in W^{2,2}_{loc}(\Omega)\cap C_{loc}^{1,\beta}(\Omega)$.  Furthermore,  for any $p>1$, $u\in W^{2,2}_{loc}(\{\nabla u|\neq 0\})\cap C_{loc}^{1,\beta}(\Omega)$.

Global second-order estimates for anisotropic problems, within the Uhlenbeck structure have been obtained in very recent result due to Antonini, Cianchi, Ciraolo, Farina and Maz'ya \cite{ACCFM}. This paper focuses on an $L^2$-second-order regularity theory for solutions to \eqref{eq:forte} which can be stated as
$$
 f\in L^2 \iff A(H(\nabla u)) H(\nabla u)\nabla H(\nabla u)\in W^{1,2}(\Omega).
$$
under minimal assumptions on the regularity of $\partial \Omega$ and on $H$.

We also mention that fine results regarding the Sobolev regularity of the stress field can be found in \cite{BaCiDiMa,beni,Cma,MMS,MMSV,S1,S2}.

In this paper, taking into account all the aforementioned results, we adopt an Orlicz-Sobolev space  defined by the function $A(\cdot)$ as a natural setting for solutions to \eqref{eq:forte}. This approach has been extensively used in \cite{CiMa11, CiMa14, CiMa18}.  We then prove that if additional regularity is imposed on $f$, further regularity results can be obtained for the solution.

\begin{thm}\label{W2}
    Let $u$ be a weak solution to \eqref{eq:forte}, where $f$ satisfies $(H_f)$. 
     Suppose $\displaystyle \inf_{t\in [0,M]}A(t)=0$. Then
    \begin{itemize}
        \item if $M_A < 1,$ it holds
        \begin{equation}\label{eq:1}
            u \in W^{2,2}_{loc}(\Omega).
        \end{equation}
        \item If $M_A \ge 1$   and if we assume that 
    \begin{equation}
        f \ge \tau > 0 \quad  \text{a.e. in    } \Omega.
    \end{equation} it holds
        \begin{equation}\label{eq:2}
            u \in W^{2,q}_{loc}(\Omega),
        \end{equation}
        with $\displaystyle q < \frac{M_A+1}{M_A}.$
    \end{itemize}
If $\displaystyle \inf_{t\in [0,M]}A(t)>0$, then \eqref{eq:1} holds without further assumptions on $M_A$.
\end{thm}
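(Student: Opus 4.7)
The plan is to follow the linearized-equation strategy developed by Damascelli--Sciunzi and refined by Castorina, Riey and Sciunzi \cite{CaRiSc}, now adapted to the anisotropic Finsler norm $H$ and to a general Orlicz-type weight $A$. Since $u\in C^{1,\alpha}_{loc}(\Omega)\cap C^2(\Omega\setminus Z_u)$ by the cited regularity results, one may differentiate \eqref{eq:forte} on the open set $\{\nabla u\neq 0\}$ to obtain, for each $k=1,\dots,n$, a linear equation
\begin{equation*}
-\partial_i\bigl(\mathcal A_{ij}(\nabla u)\,\partial_{jk}u\bigr)=\partial_k f,\qquad \mathcal A_{ij}(\xi):=\partial_{\xi_i}\bigl(A(H(\xi))\,H(\xi)\,H_{\xi_j}(\xi)\bigr).
\end{equation*}
Using the Finsler properties of $H$ together with the bounds $m_A\leq tA'(t)/A(t)\leq M_A$, the matrix $\mathcal A$ is elliptic on $\{\xi\neq 0\}$ with eigenvalues comparable to $A(H(\xi))$, up to constants depending on $m_A,M_A$ and $H$.

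Test the linearized equation with $\varphi_k=\partial_k u\,\psi^2\,G_\sigma(H(\nabla u))$, where $\psi\in C_c^\infty(\Omega)$ is a cutoff, $G_\sigma$ is a weight to be chosen (in practice, an inverse regularized power of $A$), and $\sigma>0$ is a small regularization parameter. Integration by parts, summation over $k$, Cauchy--Schwarz/Young and absorption of cross terms produce a weighted Caccioppoli inequality whose left-hand side is $\int_\Omega \psi^2\,A(H(\nabla u))\,G_\sigma(H(\nabla u))\,|D^2u|^2\,\di x$ and whose right-hand side is controlled by integrals involving $f$, $\nabla f$, $\nabla u$, $\nabla\psi$ and the weight $G_\sigma$. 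When $M_A<1$ one selects $G_\sigma$ so that $A\cdot G_\sigma$ is bounded below (up to an error vanishing as $\sigma\to 0$), forcing the left-hand side to dominate $\int\psi^2|D^2u|^2$; the hypothesis $M_A<1$ is precisely what makes the corresponding right-hand side integrable thanks to the Sobolev bound on $f$ assumed in $(H_f)$. When $M_A\geq 1$ the same recipe fails because of conflicting constraints as $\nabla u\to 0$; instead one picks a weaker weight and converts the resulting weighted $L^2$ estimate into an $L^q$ bound on $D^2u$ via H\"older's inequality, with sharp threshold $q<(M_A+1)/M_A$. In this regime the positivity $f\geq\tau>0$ is essential to bound the measure of the degenerate set $\{H(\nabla u)\leq\varepsilon\}$ directly from \eqref{eq:forte} and thus to absorb the contribution of the $\sigma$-regularization when passing to the limit. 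The case $\inf A>0$ is the simplest: $\mathcal A$ is already non-degenerate, so the choice $G_\sigma\equiv 1$ yields \eqref{eq:1} with no constraint on $M_A$.

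The main obstacle is twofold. First, the anisotropic derivatives $H_{\xi_j}$, $H_{\xi_i\xi_j}$ appearing in $\mathcal A$ generate genuinely new commutator terms when $G_\sigma\circ H$ is differentiated; controlling them relies on systematic use of Euler's identity $H_{\xi_j}(\xi)\xi_j=H(\xi)$, of the uniform convexity of $H^2/2$, and of the two-sided bounds on $tA'(t)/A(t)$. Second, extracting the optimal exponent $q<(M_A+1)/M_A$ in the $M_A\geq 1$ regime requires a delicate balance between the degenerate power of $A$ in the ellipticity and the regularized weight: the limiting procedure as $\sigma\to 0$ must be carried out by exhausting $\Omega$ with the sets $\{H(\nabla u)>\delta\}$ and invoking Fatou's lemma to recover the desired bound, and it is precisely at this juncture that the positivity of $f$ enters in an essential way.
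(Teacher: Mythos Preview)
Your overall architecture---differentiate \eqref{eq:forte}, test the linearized equation, extract a weighted Caccioppoli estimate, and in the degenerate regime upgrade to $L^q$ via H\"older---matches the paper's. But two points deserve correction, one of which is a genuine gap.

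\textbf{The weighted estimate.} The paper's core bound (its Theorem~\ref{stime derivate seconde}) is
\[
\int_{B_R\setminus Z_u}\frac{A(H(\nabla u))\,|\nabla u_i|^2}{|u_i|^{\beta}}\,dx\le C,\qquad 0\le\beta<1,
\]
obtained by testing the $i$-th linearized equation with $\psi=T_\epsilon(u_i)\varphi^2$, where $T_\epsilon(t)\approx t/|t|^{\beta}$. The weight is $|u_i|^{-\beta}$, \emph{not} a function of $H(\nabla u)$ or of $A$ as you propose. This matters: with your choice $G_\sigma\sim A^{-1}$, differentiating $G_\sigma(H(\nabla u))$ produces a term $A'(H)\langle\nabla H,\nabla u_k\rangle/(A+\sigma)^2$ which, after pairing with the elliptic part, yields a contribution of size $\sim M_A\int A^{-1}|D^2u|^2$, i.e.\ of the same order as the left-hand side with a constant $M_A$ that you cannot absorb. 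The paper avoids this by keeping $A$ out of the test function at this stage; the case $M_A<1$ then follows directly from the displayed estimate and \eqref{eq:hp cianchi}, since $A(H)\ge c\,H^{M_A}$ and one may take $\beta\in(M_A,1)$.

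\textbf{The role of $f\ge\tau$.} In the $M_A\ge1$ regime the paper does \emph{not} use positivity of $f$ to ``bound the measure of $\{H(\nabla u)\le\varepsilon\}$''. Instead it proves a separate integrability result (its Theorem~\ref{inverso del peso}): testing the \emph{original} equation \eqref{eq:debole}---not the linearized one---with $\varphi=\psi^2/(A(H(\nabla u))+\epsilon)^{\alpha r}$, $\alpha=(M_A+1)/M_A$, $r<1$, the assumption $f\ge\tau$ gives
\[
\tau\int_{B}\frac{\psi^2}{(A+\epsilon)^{\alpha r}}\,dx\le\int_B f\varphi\,dx=\int_B A\,H\,\langle\nabla H,\nabla\varphi\rangle\,dx,
\]
and the right-hand side is shown to be bounded using the weighted second-derivative estimate already established. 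This yields $A(H(\nabla u))^{-1}\in L^{\alpha r}_{loc}$ for every $r<1$, which is exactly the dual ingredient needed to run H\"older against $\int A|D^2u|^2/|\nabla u|^\beta$ and conclude $D^2u\in L^q_{loc}$ for $q<(M_A+1)/M_A$. Your proposal does not identify this step; without it there is no clear mechanism producing the sharp threshold $q<(M_A+1)/M_A$, and a mere bound on $|\{H(\nabla u)\le\varepsilon\}|$ would not suffice, since what is needed is integrability of a negative power of $A$, not smallness of a sublevel set.
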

\begin{rem}
Notice that our results broaden those obtained in \cite{ CMMS, EST} in the isotropic case.  Indeed, one can easily observe that, compared to \cite{ CMMS, EST}, we do not impose any additional polynomial constraint on the function $A$ (see \cite {CMMS}-(1.3-1.4)-and \cite{EST}-(1.4-1.5)).  Our results depend only on the properties of $A$ itself.
\end{rem}
The reader may note that, as highlighted in \cite[Remark 2.7]{CiMa18} (see \cite[Remark 2.6]{CiMa18} for sake of completeness) if $f\in L^{n,1}$ and $\displaystyle \inf_t A(t)>0$ then $u\in W_{loc}^{2,2}(\Omega)$; we cover this result, for our source term (see Remark \ref{cianchirem}). \\
Here we investigate the complementary case $\displaystyle \inf_t A(t)=0$, mainly linking the regularity of the solution to the value $M_A$. As a rule, for equation \eqref{laplani}, $M_A=m_A=p-2$, we deduce that $u\in W^{2,2}_{loc}$ for $p\in[2,3)$ under our hypothesis on $f$ (recall \cite{ACF}). 

Our second result concerns the regularity of the families of stress fields $A(H(\nabla u))^{\beta} \, H(\nabla u)\nabla H(\nabla u)$-like. This result, once again, depends only on $m_A$ and $M_A$.

 \begin{thm}\label{eq:campovettoriale}
    Let $u$ be a weak solution to \eqref{eq:forte}, where $f$ satisfies $(H_f)$. \\Then 
    \begin{itemize}
    \item If $0 < m_{A} < M_{A},$ then 
    \begin{equation*} 
        A(H(\nabla u))^{\frac{k-1}{m_{A}}} \, H(\nabla u)\nabla H(\nabla u)\in W^{1,2}_{loc}(\Omega,\R^{N}),
    \end{equation*}
    for any $\displaystyle k > 1 + \frac{m_{A}}{2} - \frac{m_{A}}{2M_{A}}.$ \newline
    \item If $m_{A} < M_{A} < 0,$ then 
    \begin{equation*}
        A(H(\nabla u))^{\frac{k-1}{m_{A}}} \, H(\nabla u)\nabla H(\nabla u)\in W^{1,2}_{loc}(\Omega,\R^{N}),
    \end{equation*}
    for any $\displaystyle k > \frac 12+\frac{m_{A}}{2}.$ \newline
    \item If $m_{A} < 0 < M_{A},$ then 
    \begin{equation*}
        A(H(\nabla u))^{\frac{k-1}{M_{A}}} \, H(\nabla u)\nabla H(\nabla u)\in W^{1,2}_{loc}(\Omega,\R^{N}),
    \end{equation*}
    for any $\displaystyle k \in \left(\frac 12+\frac{M_A}{2}, 1 + \frac {M_A}{2}-\frac{M_A}{2m_A}\right).$ 
    \newline
    \item If $m_{A} = 0$ and $M_{A} > 0,$ then
    \begin{equation*}
        A(H(\nabla u))^{k-1} \, H(\nabla u)\nabla H(\nabla u) \in W^{1,2}_{loc}(\Omega,\R^{N}),
    \end{equation*}
    for any $\displaystyle k > \frac{3}{2}-\frac{1}{2M_A}.$\newline
    \item If $m_{A} < 0$ and $M_{A} = 0$, then 
    \begin{equation*}
        A(H(\nabla u))^{k-1} \, H(\nabla u)\nabla H(\nabla u) \in W^{1,2}_{loc}(\Omega,\R^{N}),
    \end{equation*} for any $\displaystyle k<\frac{3}{2}-\frac{1}{2m_A}.$
    \end{itemize}
\end{thm}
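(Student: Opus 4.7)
I would proceed by a Caccioppoli-type argument on a regularization of \eqref{eq:forte}. Replace $A$ by a uniformly elliptic perturbation $A^\nu_\lambda$ that preserves the two-sided bounds $m_A\le tA'(t)/A(t)\le M_A$; the corresponding approximating solutions $\ul$ belong to $C^2(\Omega)$ and admit classical differentiation of the equation. For each $i\in\{1,\dots,N\}$, differentiating the regularized equation with respect to $x_i$ and testing against $\Psi(H(\n\ul))\,\partial_i\ul\,\eta^2$, where $\eta\in C_c^\infty$ is a standard cut-off and $\Psi$ a positive weight to be chosen, then summing over $i$, produces an integral identity whose principal term controls from below
\[
\io\Psi(H(\n\ul))\,A(H(\n\ul))\,H(\n\ul)\,|D^2\ul|^2\,\eta^2,
\]
thanks to the uniform ellipticity of $D^2H$ on its level sets and to the two-sided bound on $tA'/A$.

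\textbf{Identification of the weight.} The chain rule combined with the homogeneity of $H$ yields, for $F=A(H(\n u))^{\beta}\,H(\n u)\,\n H(\n u)$, the pointwise bound
\[
|\n F|^2 \le C\bigl[1+(tA'/A)^2\bigr]\,A(H)^{2\beta+1}\,H\,|D^2u|^2.
\]
Matching this bound with the coercive term in the Caccioppoli identity dictates the choice $\Psi(t)=A(t)^{2\beta}$. The admissible range of $k$ is then determined by requiring that $\Psi(H)\,\partial_i\ul\,\eta^2$ be an admissible test function (finite integrability near $\{H=0\}$ and near $\{H=+\infty\}$) and that the composite weight be reabsorbable in the energy estimate. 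In the first bullet ($0<m_A<M_A$) the choice $\beta=(k-1)/m_A$ uses the lower growth index $m_A$ to bound $A^{2\beta}$ from below by a suitable power of $H$, which yields precisely $k>1+m_A/2-m_A/(2M_A)$. The remaining bullets differ only in which of $m_A,M_A$ provides the correct normalization: if both indices are negative one normalizes by $m_A$, if they are of opposite sign or one vanishes one normalizes by $M_A$, reflecting in each case the location of the degeneracy of $A$.

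\textbf{Source term and limit.} The contribution $\sum_i\io\partial_if\,\Psi(H)\,\partial_i\ul\,\eta^2$ is handled via H\"older's inequality using the assumption $f\in W^{1,N/(N-\gamma-s)}_{loc}\cap C^{0,\alpha}_{loc}$ from $(H_f)$, Sobolev embedding, and the $W^{2,q}_{loc}$ information provided by Theorem \ref{W2} (with the correct $q$ depending on the position of $M_A$ with respect to $1$); the resulting quantity is absorbed into the coercive left-hand side by Young's inequality. Passing first $\nu\to0$ (to remove the non-degeneracy in $A$) and then $\lambda\to0$ (to recover the original datum) transfers the uniform bound to $F$, and lower semicontinuity of the Dirichlet-type functional yields the claimed membership $F\in W^{1,2}_{loc}(\Omega,\R^N)$.

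\textbf{Main obstacle.} The technical core is the simultaneous calibration, in each of the five regimes, of the exponent $\beta$: one needs $\Psi(H)\,\partial_i\ul\,\eta^2$ to be an admissible test function for the approximating problem, the Caccioppoli estimate to be uniform in $\lambda,\nu$, and the resulting weight to be exactly the one that identifies the object $F$ whose $W^{1,2}$ norm one wishes to bound. Balancing the weight against both $m_A$ and $M_A$ (which coincide for the model $p$-Laplace case but generically do not) is the source of the case distinction and of the admissible ranges for $k$ stated in the theorem.
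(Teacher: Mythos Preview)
Your approach differs from the paper's in two structural ways, and one of them creates a genuine gap.

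\textbf{The paper's route.} The paper never regularizes the operator. It uses directly that $u\in C^{1,\alpha}_{loc}\cap C^2(\Omega\setminus Z_u)$ and handles the critical set $Z_u=\{\nabla u=0\}$ by explicit truncations $G_\varepsilon$, $J_\varepsilon$ vanishing on $\{|\nabla u|\le\varepsilon\}$; no perturbation $A^\nu_\lambda$ ``preserving $m_A,M_A$'' (a nontrivial construction you leave unjustified) is needed. The core is a single intermediate estimate (Theorem~\ref{stime derivate seconde}),
\[
\int_{B_R\setminus Z_u}\frac{A(H(\nabla u))\,|\nabla u_i|^2}{|x-y|^\gamma\,|u_i|^\beta}\,dx\le C,\qquad 0\le\beta<1,
\]
obtained by testing the linearized equation with $\psi=T_\varepsilon(u_i)\,H_\delta(|x-y|)\,\varphi^2$, where $T_\varepsilon(t)=G_\varepsilon(t)/|t|^\beta$. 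Note the weight depends on a \emph{single} partial derivative $u_i$, not on $H(\nabla u)$ as in your $\Psi$. With this universal estimate in hand, each bullet is proved by the pointwise bound $|\nabla V_{\varepsilon,j}|\le C\,A(H)^{\mathrm{exp}}\,|D^2u|\,\chi_{\{|\nabla u|>\varepsilon\}}$ on the truncated field $V_{\varepsilon,j}=A(H)^{\mathrm{exp}}\,H\,H_{\eta_j}\,J_\varepsilon(|\nabla u|)$, followed by the comparison \eqref{eq:hp cianchi} and a split into $\{H<1\}$ and $\{H\ge1\}$. The five ranges for $k$ arise entirely from this algebraic comparison, not from five different Caccioppoli weights.

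\textbf{The gap in your sketch.} You propose to control the source contribution $\sum_i\int\partial_if\,\Psi(H)\,\partial_i u_\lambda\,\eta^2$ by invoking Theorem~\ref{W2}. But Theorem~\ref{W2} requires $f\ge\tau>0$ whenever $M_A\ge1$, a hypothesis absent from Theorem~\ref{eq:campovettoriale}. As written, your argument would establish the result only under this extra sign condition (e.g.\ for the Finsler $p$-Laplacian with $p\ge3$ only when $f$ is bounded below). In the paper the source term is handled once, inside the proof of Theorem~\ref{stime derivate seconde}, directly from $(H_f)$ via $|T_\varepsilon(u_i)|\le|u_i|^{1-\beta}$ and the local $C^1$ bound on $u$; no second-order information on $u$ enters. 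A minor additional imprecision: by Lemma~\ref{eq:lemma} the coercive lower bound is $\tilde C\,A(H)\,|v|^2$, without the extra factor $H$ you wrote; this does not break the scheme but it does alter the weight matching you describe.
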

Of course, appropriate choices for $k$ recover the optimal results due to Antonini et al. \cite{ACCFM}
$$
A(H(\nabla u)) H(\nabla u)\nabla H(\nabla u)\in W^{1,2}_{loc}(\Omega).
$$

Concluding the introduction, let us highlight that the study of anisotropic operators is of significant interest for two main reasons. The first concerns the pure mathematics and the richer geometric structure that the anisotropy induces. The second relies applications, from the computer vision to the continuum mechanics, particularly in scenarios where materials exhibit distinct behaviors depending on directionality, often due to the crystalline microstructure of the medium.
We refer to \cite{BrRiSo,CaRiSc,CFV16,CFV17}  for complete details and references.

The paper is organized as follows. In the next section we introduce the notation, preliminaries and a brief introduction to Orlicz-Sobolev spaces, to clarify the meaning of weak solution for \eqref{eq:forte}. In Section \ref{sezz3} we establish summability properties for the second derivative which are fundamental for our results. In the same Section we prove Theorem \ref{eq:campovettoriale} by means of Propositions. In the Section \ref{sezz4} we get summability properties for the weight $A(H(\nabla u))^{-1}$ with respect to a kernel $|x-y|^{-\gamma}$. This is a powerful tool for proving the second statement of Theorem \ref{W2} and plays a key role in establishing weighted Sobolev inequalities (see \cite{DaSc}). The proof of Theorem \ref{W2} concludes the paper.

\section{Notation and preliminaries}\label{preliminari}
Generic numerical constants will
be denoted by $C$ (with subscript in some case) and they will be
allowed to vary within a single line or formula. The Lebesgue measure of a measurable set $K$
will be denoted by $|K|$ .\\ 

In what follows, we will assume standard hypotheses on the functions involved in our problem (see 	\cite{CiMa11, CiMa14, CiMa18}).\\
The function $A:(0,+ \infty) \rightarrow (0,+\infty)$ is of class $C_{loc}^{1,\alpha}((0,+\infty))$ and fulfills the following:
 \begin{equation} \label{eq:hp principale}
  -1 < \inf_{t>0} \frac{tA'(t)}{A(t)} := m_{A} \le M_{A} := \sup_{t>0} \frac{tA'(t)}{A(t)} < + \infty.
 \end{equation}
We recall that above assumption gives us that (see \cite[Proposition $4.1$]{CiMa14})
\begin{equation}\label{eq:hp cianchi}
 A(1) \min \{ t^{m_A}, t^{M_A} \} \le A(t) \le A(1) \max \{t^{m_A}, t^{M_A}   \} \qquad \text{for } t>0,
\end{equation}
holds.

 Moreover, since $m_A>-1$ there exists $\sigma\in[0,1)$ such that $m_A+\sigma>0$ and 
 \begin{equation}\label{sigma}
     t^\sigma A(t)\to 0, \quad\text{as } t\to 0.
 \end{equation} 

The function $H$ will be used to indicate the well-known Finsler norm; by definition, $H$ satisfies
\begin{enumerate}
 \item[(i)] $H$ is a norm of class $C_{loc}^{2,\alpha}(\R^N\setminus \{0\})$;
 \item[(ii)] $H$ is uniformly elliptic, that is, the set $\mathcal{B}_1^H:=\{\xi \in \R^N  :  H(\xi) < 1\}$ is \emph{uniformly convex}, i.e., all the principal curvatures of its boundary are bounded away from zero. This is equivalent to say that there exists $\lambda>0$ such that, for all $\xi\neq 0$ and $\eta\in\mathbb{R}^N$,
\begin{equation}\label{H2 da sotto}
 \langle D^2 H^2(\xi)\eta,\eta\rangle \geq \lambda |\eta|^2
\end{equation}
\end{enumerate}
\begin{rem} Next we summarize some useful properties of the function $H$ proved in \cite{CFV16,CFV17}.
\begin{itemize}
\item $H$ is a norm equivalent to the Euclidean one, i.e. there exist $c_1,c_2>0$ such that:
\begin{equation}\label{H equiv euclidea}
c_1|\xi|\leq H(\xi)\leq c_2|\xi|,\quad\forall\, \xi\in\R^N.
\end{equation}

\item $H$ is 1-homogeneous, hence, by the Euler Theorem it follows
\begin{equation}\label{eq:eulero}
 \langle\nabla H(\xi), \xi\rangle = H(\xi), \qquad\forall\, \xi \in \R^N\setminus \{0\}.
\end{equation}
Moreover
\begin{equation}\label{eq:grad 0 omog}
\nabla H(t\xi)=\hbox{sign}(t)\n H(\xi), \qquad\forall\,\xi \in \R^N\setminus \{0\},\, \forall t\neq 0.
\end{equation}
By the previous equality, we infer that there exists $K_1>0$ such that 
\begin{equation}\label{grad H limitato}
|\nabla H(\xi)| \le  K_1, \qquad \forall \xi\in\R^N\setminus \{0\}.
\end{equation}
\item Since $H^2$ is 2-homogeneous there exists $\Lambda>0$ such that
$$
 \langle D^2 H^2(\xi)\eta,\eta\rangle \leq \Lambda_2 |\eta|^2
$$
for $\xi\neq 0$ and $\eta\in\mathbb{R}^N$.
\item Since $D^2H$ is a $(-1)-$homogeneous function we have
\begin{equation}\label{eq:hess -1 omog}
\D^2 H(t\xi)=\frac{1}{|t|}\D^2H(\xi) \qquad\forall\,\xi \in  \R^N\setminus \{0\},\, \forall t\neq 0.
\end{equation}
It immediately implies that there exists $K_2 >0$ such that 
\begin{equation} \label{eq:Hess H limitato}
    |D^{2}H(\xi)| \le \frac{K_2}{|\xi|}, \qquad \forall \xi\in\R^N\setminus \{0\},
\end{equation}
where $|\cdot|$ denotes the usual Euclidean  norm of a matrix, and 
\begin{equation}\label{eq:hess nullo}
\D^2 H(\xi)\xi=0, \qquad\forall\,\xi\in\R^N\setminus \{0\}.
\end{equation}

\item Assumption (ii) is equivalent to state:
\begin{equation} \label{eq:definita positiva 2}
    \exists \Lambda > 0: \langle D^2 H(\xi)v, v \rangle \geq \Lambda |\xi|^{-1} |v|^2 \quad \forall \xi \in \R^{N} \setminus \{0\}, \; \forall v \in \nabla H(\xi)^\bot.
\end{equation}
\end{itemize}
\end{rem}

Next Lemma will be useful later.
\begin{lem} \label{eq:lemma} For any $v,w \in \R^{N}$ and for any $\xi \in \R^{N}\setminus \{0\}$, there exist two constants $\tilde{C}, \Bar{C} > 0$ such that:
 \begin{equation}\label{eq:minoraz}
   \left[A(H(\xi)) + H(\xi)A'(H(\xi)) \right] \langle \nabla H(\xi),v \rangle^{2} +
   H(\xi) A(H(\xi)) \langle D^{2}H(\xi) v, v \rangle \ge \Tilde{C} A(H(\xi)) |v|^{2} 
 \end{equation}\\
\begin{equation}\label{eq:maggioraz}
\begin{split}
    \left[  A(H(\xi)) + H(\xi)A'(H(\xi)) \right] \langle \nabla H(\xi), v \rangle\langle \nabla H(\xi), w \rangle &+ H(\xi) A(H(\xi)) \langle D^{2}H(\xi) v, w \rangle \\
    &\le \Bar{C} A(H(\xi)) |v| |w| \\
\end{split}
\end{equation}

where $\Tilde{C}=\Tilde{C}(c_1, m_{A},\Lambda)$ and $\Bar{C}=\Bar{C}(c_2,K_1,K_2,M_{A}).$ 
\end{lem}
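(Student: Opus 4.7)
My plan is to extract both bounds from a single identity obtained by differentiating twice the 2-homogeneous function $H^{2}$. A direct computation using the product rule yields
\begin{equation*}
D^{2}(H^{2})(\xi)=2\,\nabla H(\xi)\otimes\nabla H(\xi)+2\,H(\xi)\,D^{2}H(\xi),
\end{equation*}
hence, for any $v\in\mathbb{R}^{N}$,
\begin{equation*}
\langle D^{2}(H^{2})(\xi)v,v\rangle=2\,\langle\nabla H(\xi),v\rangle^{2}+2\,H(\xi)\,\langle D^{2}H(\xi)v,v\rangle.
\end{equation*}
Combined with the uniform convexity assumption \eqref{H2 da sotto}, this produces the master inequality
\begin{equation*}
\langle\nabla H(\xi),v\rangle^{2}+H(\xi)\,\langle D^{2}H(\xi)v,v\rangle\ge\tfrac{\lambda}{2}|v|^{2},
\end{equation*}
which is the workhorse for \eqref{eq:minoraz}.

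For the lower bound \eqref{eq:minoraz}, I would split the left-hand side as $A(H)\bigl[\langle\nabla H,v\rangle^{2}+H\langle D^{2}Hv,v\rangle\bigr]+HA'(H)\langle\nabla H,v\rangle^{2}$. If $m_{A}\ge 0$ the extra term is nonnegative and the master inequality immediately gives the claim with $\tilde C=\lambda/2$. If instead $m_{A}<0$, I would use $HA'(H)\ge m_{A}A(H)$ (from $tA'/A\ge m_{A}$ and $A>0$) to obtain
\begin{equation*}
\text{LHS}\ge(1+m_{A})A(H)\langle\nabla H,v\rangle^{2}+HA(H)\langle D^{2}Hv,v\rangle.
\end{equation*}
Since $\langle D^{2}Hv,v\rangle\ge 0$ by convexity of $H$, I can then rewrite this right-hand side as $(1+m_{A})A(H)[\langle\nabla H,v\rangle^{2}+H\langle D^{2}Hv,v\rangle]+(-m_{A})HA(H)\langle D^{2}Hv,v\rangle$, and the master inequality yields \eqref{eq:minoraz} with $\tilde C=(1+m_{A})\lambda/2>0$, positivity being guaranteed by $m_{A}>-1$.

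For the upper bound \eqref{eq:maggioraz}, the argument reduces to Cauchy--Schwarz together with the pointwise bounds already collected in the preliminaries. The coefficient is controlled through $0<A(H)+HA'(H)\le(1+M_{A})A(H)$, coming from $tA'/A\le M_{A}$; the factor $\langle\nabla H,v\rangle\langle\nabla H,w\rangle$ is bounded by $K_{1}^{2}|v||w|$ by \eqref{grad H limitato}; and the Hessian term satisfies $H(\xi)|\langle D^{2}H(\xi)v,w\rangle|\le H(\xi)\cdot(K_{2}/|\xi|)|v||w|\le c_{2}K_{2}|v||w|$ thanks to \eqref{eq:Hess H limitato} and the equivalence \eqref{H equiv euclidea}. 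Adding the two contributions delivers \eqref{eq:maggioraz} with $\bar C=(1+M_{A})K_{1}^{2}+c_{2}K_{2}$.

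The only mildly delicate point is the case $m_{A}<0$ in the lower bound, where the coefficient $A(H)+HA'(H)$ can be strictly smaller than $A(H)$; the nonnegativity of $\langle D^{2}Hv,v\rangle$ combined with the strict bound $m_{A}>-1$ is precisely what allows one to reabsorb the deficit and preserve a positive constant. Everything else is bookkeeping with the homogeneity properties and the pointwise estimates on $\nabla H$ and $D^{2}H$.
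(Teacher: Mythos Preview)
Your argument is correct and, for the lower bound \eqref{eq:minoraz}, genuinely different from the paper's. The paper decomposes $v=\alpha\xi+\eta$ with $\eta\in\nabla H(\xi)^{\perp}$, uses \eqref{eq:eulero} and \eqref{eq:hess nullo} to kill cross terms, and then invokes the restricted ellipticity \eqref{eq:definita positiva 2} on $\nabla H(\xi)^{\perp}$; a subsequent case analysis on whether $2|\alpha\xi|\le|v|$ or $2|\alpha\xi|>|v|$ recovers control of the full $|v|^{2}$, producing $\tilde C=\min\{c_{1}\Lambda/4,\,c_{1}(1+m_{A})/4\}$. Your route instead exploits the full uniform convexity \eqref{H2 da sotto} through the identity $D^{2}(H^{2})=2\nabla H\otimes\nabla H+2H\,D^{2}H$, which immediately gives the quadratic lower bound without any decomposition or dichotomy; the residual $HA'(H)\langle\nabla H,v\rangle^{2}$ is then absorbed using $m_{A}>-1$ and the positive semidefiniteness of $D^{2}H$ (convexity of the norm $H$). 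This is shorter and more transparent. The trade-off is only in the advertised dependence of the constant: you obtain $\tilde C=\tfrac{\lambda}{2}\min\{1,1+m_{A}\}$ depending on $(\lambda,m_{A})$ rather than $(c_{1},m_{A},\Lambda)$ as stated, but since \eqref{H2 da sotto} and \eqref{eq:definita positiva 2} are equivalent formulations of the same uniform convexity hypothesis this is cosmetic. For the upper bound \eqref{eq:maggioraz} your proof coincides with the paper's, with the minor refinement that you use $1+M_{A}$ where the paper writes $1+|M_{A}|$.
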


\begin{proof}
    Let $\xi \in \R^{N}\setminus \{0\}$. By  \eqref{eq:eulero}, we deduce that $\R^{N} = Span\{\xi, \nabla H(\xi)^{\bot}\}$. Now we consider
    \begin{equation*}
        v = \alpha \xi + \eta,
    \end{equation*}
    with $\alpha \in \R$ and $\eta \in \nabla H(\xi)^{\bot}.$
    By \eqref{eq:hp principale}, \eqref{eq:eulero}, \eqref{eq:hess nullo}, \eqref{eq:definita positiva 2}, since $D^2H$ is a symmetric matrix we get:
    \begin{eqnarray}\label{eq:blabla1}
        \nonumber &&\left[  A(H(\xi)) + H(\xi)A'(H(\xi)) \right] \langle \nabla H(\xi),v \rangle^{2} + H(\xi) A(H(\xi)) \langle D^{2}H(\xi) v, v \rangle \\
        \nonumber &=&\left[  A(H(\xi)) + H(\xi)A'(H(\xi)) \right] \left(\alpha\langle  \nabla H(\xi), \xi \rangle \right)^{2} + \alpha H(\xi) A(H(\xi))  \langle D^{2}H(\xi) v,  \xi  \rangle \\
        &&+H(\xi) A(H(\xi))  \langle D^{2}H(\xi) v,\eta\rangle \\
        \nonumber &=&\left[  A(H(\xi)) + H(\xi)A'(H(\xi)) \right] \alpha^2 H^2(\xi)+ \alpha H(\xi) A(H(\xi)) \langle D^{2}H(\xi) \eta, \xi \rangle \\
        \nonumber &&+H(\xi) A(H(\xi)) \langle D^{2}H(\xi) \eta, \eta  \rangle \\ \nonumber &=&\left[  A(H(\xi)) + H(\xi)A'(H(\xi)) \right] \alpha^2 H^2(\xi)+H(\xi) A(H(\xi)) \langle D^{2}H(\xi) \eta, \eta  \rangle \\
        \nonumber &\geq&\left[  A(H(\xi)) + H(\xi)A'(H(\xi)) \right] \alpha^2 H^{2}(\xi) + \Lambda H(\xi) A(H(\xi)) |\xi|^{-1} |\eta|^{2} =\\
        \nonumber &=&\alpha^2 A(H(\xi)) \left[1 + \frac{H(\xi)A'(H(\xi))}{A(H(\xi))}\right] H^{2}(\xi) + \Lambda H(\xi) A(H(\xi)) |\xi|^{-1} |\eta|^{2} \\
        \nonumber &\geq&\alpha^2 \left( m_{A}+1 \right) A(H(\xi)) H^{2}(\xi)  + \Lambda H(\xi) A(H(\xi)) |\xi|^{-1} |\eta|^{2}
     \end{eqnarray} 
    We now consider two cases: if $2|\alpha\xi| \le |v|$, 
       \begin{equation} \label{eq:primocaso}
        |
\eta|^{2} = |v - \alpha \xi|^{2} \ge (|v| - |\alpha \xi|)^{2} \ge \frac{|v|^{2}}{4}.
    \end{equation}
Therefore, by \eqref{eq:primocaso} and \eqref{H equiv euclidea} we have that 
 \begin{eqnarray}
  \nonumber &&\left[  A(H(\xi)) + H(\xi)A'(H(\xi)) \right] \langle \nabla H(\xi),v \rangle^{2} + H(\xi) A(H(\xi)) \langle D^{2}H(\xi) v, v \rangle \\
  \nonumber &\ge& \Lambda H(\xi) A(H(\xi))  |\xi|^{-1} |\eta|^{2}  \ge  \Lambda H(\xi) A(H(\xi)) |\xi|^{-1} \frac{|v|^{2}}{4} \ge c_{1} \Lambda A(H(\xi))  \frac{|v|^{2}}{4} \\
  &=& \Tilde{C_{1}} A(H(\xi)) |v|^{2},
\end{eqnarray}
where $\displaystyle \Tilde{C_{1}} := \frac{c_{1} \Lambda}{4} > 0.$\\ \\
 On the contrary, if $2|\alpha\xi| > |v|$:
 \begin{eqnarray}
\nonumber &&\left[  A(H(\xi)) + H(\xi)A'(H(\xi)) \right] \langle \nabla H(\xi),v \rangle^{2} + H(\xi) A(H(\xi)) \langle D^{2}H(\xi) v, v \rangle \\
 &\ge& \alpha^2c_1 (m_{A}+1) A(H(\xi)) |\xi|^{2} \ge \Tilde{C_{2}} A(H(\xi)) |v|^{2},
\end{eqnarray}
 where $\displaystyle \Tilde{C_{2}} := \frac{c_1(m_{A}+1)}{4} > 0$.\\ \\
 Choosing $\Tilde{C} = \min \{\Tilde{C_1} , \Tilde{C_2}\},$ we have \eqref{eq:minoraz}.\\ 
 
 Let us now prove \eqref{eq:maggioraz}. By \eqref{eq:hp principale}, \eqref{H equiv euclidea}, \eqref{grad H limitato} and \eqref{eq:Hess H limitato}  we get
    \begin{equation}
        \begin{split}
            & \quad \left[  A(H(\xi)) + H(\xi)A'(H(\xi)) \right] \langle \nabla H(\xi),v\rangle \langle\nabla H(\xi), w \rangle + H(\xi) A(H(\xi)) \langle D^{2}H(\xi) v, w \rangle  \\
            &\le \left[  A(H(\xi)) + H(\xi)A'(H(\xi)) \right]  |\nabla H(\xi)|^2  |v|  |w|  + H(\xi) A(H(\xi))  |D^{2}H(\xi)| |v| |w|   \\
            &\le K^{2}_{1} \left(1 +|M_{A}| \right) A(H(\xi))  |v||w| + K_{2}c_2 A(H(\xi)) |v||w| \\
            &=: \Bar{C}(c_2,K_1,K_2, M_A) A(H(\xi)) |v||w|, \\
        \end{split}
    \end{equation}
   where $\Bar{C}(c_2,K_1,K_2, M_{A})>0$. This completes the proof.
\end{proof}

\subsection{Weak solution of our problem.}
We conclude this section by recalling the concept of weak solution for our main problem \eqref{eq:forte}.
Following \cite[Section 2.2]{CiMa11} and assuming \eqref{eq:hp principale} for $A$, it can be verified that the function
\begin{equation}\label{funzioneA}
  \mathcal{A}(t):=\int_0^t A(s)s \, ds  
\end{equation}
is a Young function (i.e. it is convex and $\mathcal A(0)=0$) and
$$
 \inf_{t>0}\left(1+\frac{tA'(t)}{A(t)}\right)>0,
$$
that is equivalent to the so-called $\nabla_2$-condition in the theory of Young functions (see \cite{ACCFM}).
 By \cite[Proposition 2.9]{CiMa11}, under the assumption \eqref{eq:hp principale} for $A$,  
 \begin{itemize}
 \item $\mathcal A\in\Delta_2$ (i.e. $\mathcal A(2t)\leq C \mathcal A(t)$ whenever $t>0$).
 \item $\tilde{\mathcal A}\in\Delta_2$, where  $\displaystyle \tilde{\mathcal A}(t):=\sup_{s\geq0}(st-\mathcal A(s))$.
 \item $\mathcal A$ is a $N-$function (i.e. $\mathcal A(t)/t$ is a null function as $t\to 0$ and it goes to $+\infty$, as $t\to +\infty$).
 \end{itemize}
For a such function $\mathcal A$ and $\Omega$ a domain in $\mathbb{R}^N$, following, \cite{CiMa11}, let us define
\begin{equation}
W^{1,\mathcal A}(\Omega)=\{u\in L^{\mathcal A}(\Omega): u\mbox{ is weakly-differentiable and }|\nabla u|\in L^{\mathcal A}(\Omega)\}
\end{equation}
where $L^{\mathcal A}(\Omega)$ is the Banach space of the real-valued measurable functions on $\Omega$ such that the Luxemburg norm
$$
\|u\|_{\mathcal A}=\inf \left\{\mu>0: \int_{\Omega} \mathcal A\left(\frac{|u(x)|}{\mu}\right) \ dx\leq 1\right \}<\infty.
$$

The space $W_0^{1,\mathcal A}(\Omega)$ is defined as the closure of $C_c^\infty(\Omega)$ in $W^{1,\mathcal A}(\Omega)$; the space  $W_{loc}^{1,\mathcal A}(\Omega)$ is defined accordingly.

The next proposition summarizes some density results about $W^{1,\mathcal{A}}(\Omega)$  (see \cite[Theorem 8.28]{Ad} and \cite[Theorem 2.1]{CiMa11}).
\begin{prop}\label{corDT} 
Let  $\mathcal{A}$ be defined in \eqref{funzioneA} under our assumptions on $A$. Then 
\begin{itemize}
\item $W^{1,\mathcal A}(\Omega)$ is reflexive.
  \item $C^\infty(\Omega)\cap W^{1,\mathcal A}(\Omega)$ is dense in $W^{1,\mathcal A}(\Omega)$ . 
  \item The  space $C_c^\infty(\mathbb{R}^N)$ is dense in $W^{1,\mathcal A}(\mathbb{R}^N)$ . 
  \item If $\Omega'$ is bounded and it has a Lipschitz boundary, $C^\infty(\overline{\Omega'})$ is dense in $W^{1,\mathcal A}(\Omega')$.
\end{itemize}
\end{prop}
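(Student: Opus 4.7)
The plan is to reduce each of the four assertions to the classical Orlicz–Sobolev machinery, leveraging the two $\Delta_2$ conditions (for $\mathcal A$ and for $\tilde{\mathcal A}$) already established in the preceding discussion. The first point, reflexivity, follows from the fact that $L^{\mathcal A}(\Omega)$ is reflexive exactly when both $\mathcal A$ and its Young conjugate $\tilde{\mathcal A}$ belong to $\Delta_2$; under these conditions the Luxemburg and Orlicz norms are equivalent and $(L^{\mathcal A})^* \cong L^{\tilde{\mathcal A}}$. Once this is secured, the map $u \mapsto (u,\nabla u)$ embeds $W^{1,\mathcal A}(\Omega)$ isometrically as a closed subspace of the reflexive product $L^{\mathcal A}(\Omega) \times L^{\mathcal A}(\Omega)^N$, and reflexivity is inherited by closed subspaces.

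For the second point I would run a Meyers--Serrin-type argument. Exhaust $\Omega$ by an increasing sequence of compact sets $K_j \Subset K_{j+1}$, choose a smooth partition of unity $\{\psi_j\}$ subordinate to the cover $\{K_{j+1}\setminus \overline{K_{j-1}}\}$, and mollify each $\psi_j u$ with a standard kernel $\rho_{\varepsilon_j}$ with $\varepsilon_j$ so small that $\rho_{\varepsilon_j}*(\psi_j u)$ is supported inside $K_{j+2}\setminus \overline{K_{j-2}}$ and satisfies $\|\rho_{\varepsilon_j}*(\psi_j u) - \psi_j u\|_{W^{1,\mathcal A}} < 2^{-j}\delta$. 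Summing over $j$ gives a $C^\infty$ function within $\delta$ of $u$ in the $W^{1,\mathcal A}$ norm. The only nontrivial ingredient here is that mollification converges in norm, not merely in the modular sense, in $L^{\mathcal A}$; this upgrade is precisely what $\mathcal A \in \Delta_2$ provides.

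For the third point, I would start from a function $u \in W^{1,\mathcal A}(\mathbb R^N)$ already approximated in norm by $C^\infty(\mathbb R^N)\cap W^{1,\mathcal A}$ via the previous step, then truncate with a cut-off $\eta_R \in C_c^\infty(\mathbb R^N)$, $\eta_R \equiv 1$ on $B_R$, $|\nabla \eta_R|\le 2/R$. The product rule gives
\begin{equation*}
 \nabla(\eta_R u) - \nabla u = (\eta_R - 1)\nabla u + u\,\nabla\eta_R,
\end{equation*}
and both terms tend to $0$ in $L^{\mathcal A}(\mathbb R^N)$ by dominated convergence in the modular sense — which upgrades to norm convergence thanks to $\Delta_2$. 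The fourth point, density up to the boundary on a bounded Lipschitz domain $\Omega'$, is handled by a finite partition of unity adapted to a covering of $\partial\Omega'$ by cylinders in which the boundary is the graph of a Lipschitz function. In each boundary cylinder one translates the localized piece of $u$ slightly in the inward normal direction, whose support is then compactly contained in $\Omega'$; mollifying this translate with a kernel of sufficiently small radius produces a function in $C^\infty(\overline{\Omega'})$, and the Lipschitz regularity of $\partial\Omega'$ ensures that the translation is continuous in the $W^{1,\mathcal A}$ norm, again by the $\Delta_2$ condition.

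The main obstacle throughout is the well-known subtlety that in a general Orlicz space one only has \emph{modular} convergence of mollification, translation, and truncation, not norm convergence; without a growth restriction on $\mathcal A$, $C^\infty$ need not be dense and the space need not be separable. The $\Delta_2$ property of $\mathcal A$ (for the convergence of the function values) together with that of $\tilde{\mathcal A}$ (used in reflexivity and in duality-based estimates) is exactly what closes this gap, so in each step I would carefully invoke the two $\Delta_2$ assumptions when passing from modular to norm convergence. Since all the pieces are now standard in the Orlicz–Sobolev literature, the proof ultimately reduces to citing \cite[Theorem 8.28]{Ad} and \cite[Theorem 2.1]{CiMa11}, where these facts are carried out in detail under precisely the hypotheses we have verified.
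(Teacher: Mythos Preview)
Your sketch is correct and in fact supplies considerably more detail than the paper does: the paper gives no proof at all for this proposition, merely prefacing it with a reference to \cite[Theorem 8.28]{Ad} and \cite[Theorem 2.1]{CiMa11} and treating the result as a summary of known facts. Your final sentence lands on exactly the same citations, so there is no divergence in approach to compare; you have simply unpacked what those references contain, and your identification of the $\Delta_2$ conditions on $\mathcal A$ and $\tilde{\mathcal A}$ as the mechanism that upgrades modular convergence to norm convergence (and yields reflexivity via the duality $(L^{\mathcal A})^*\cong L^{\tilde{\mathcal A}}$) is the correct diagnosis of why the classical Meyers--Serrin and cut-off arguments go through in this setting.
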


\begin{defn} Let $\Omega\subset \mathbb{R}^n$ be a domain and $f$  satisfying $(H_f)$. A weak solution of \eqref{eq:forte} is a function $u\in W_{loc}^{1,\mathcal A}(\Omega)$ such that it holds
\begin{equation}\label{eq:debole}
 \int_{\Omega}A(H(\nabla u))  H(\nabla u) \langle \nabla H(\nabla u) , \nabla \psi \rangle \,dx = \int_{\Omega} f \psi \,dx,
\end{equation}
 for every $\psi \in C^{\infty}_{c}(\Omega).$

 By density arguments,  we can test our problem against $\psi\in W^{1,\mathcal A}_0(\Omega')$, for any subset $\Omega'\subset\subset \Omega$.
\end{defn}

\section{Local regularity}\label{sezz3}
In this section, we establish a result regarding the integrability of the second derivative of our solution and subsequently use it to prove Theorem \ref{eq:campovettoriale}.

We start by outlining a regularity property of the weak solutions to \eqref{eq:forte}.

\begin{prop}
Let $u\in W_{loc}^{1,\mathcal A}(\Omega)$ be a weak solution of \eqref{eq:forte}. Then we have that $u\in C^{1,\alpha}_{loc}(\Omega)\cap C^2(\{\nabla u\neq 0\})$, for some $\alpha \in (0,1)$.  
\end{prop}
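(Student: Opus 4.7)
The plan is to split the statement into its two constituents and prove each by invoking suitable classical regularity theory. For the $C^{1,\alpha}_{loc}$ part I would appeal to the degenerate-elliptic regularity machinery developed by DiBenedetto, Tolksdorf, Uhlenbeck and Lieberman (references \cite{DiB,To,Ul,Li1,Li2}), as extended to the anisotropic Finsler setting by Cozzi-Farina-Valdinoci in \cite{CFV17}. The hypotheses of those results are met in our framework: \eqref{eq:hp principale} encodes the Uhlenbeck-type structure of the principal part, $H$ belongs to $C^{2,\alpha}_{loc}(\R^N\setminus\{0\})$ with uniformly convex unit ball via \eqref{H2 da sotto}, and $(H_f)$ guarantees that $f\in L^q_{loc}(\Omega)$ for some $q>N$ (by Sobolev embedding applied to $f\in W^{1,N/(N-\gamma-s)}_{loc}$). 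This yields $u\in C^{1,\alpha}_{loc}(\Omega)$ for some $\alpha\in(0,1)$.

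For the $C^2$ regularity on $\{\nabla u\neq 0\}$, I would observe that, thanks to the previous step, this set is open, and argue that the equation is uniformly elliptic on each of its compact subsets. Fixing $K\Subset\{\nabla u\neq 0\}$ there exists $\delta>0$ such that $|\nabla u|\geq\delta$ on $K$, hence $H(\nabla u)\geq c_1\delta$ by \eqref{H equiv euclidea}. Expanding the divergence in \eqref{eq:forte} produces a non-divergence-form equation $-\sum_{i,j}a^{ij}(\nabla u)\,\partial_{ij}u=f$ whose symmetric coefficient matrix, thanks to Lemma \ref{eq:lemma}, obeys $\tilde C\,A(H(\xi))|v|^{2}\le\langle a(\xi)v,v\rangle\le\bar C\,A(H(\xi))|v|^{2}$ for $\xi\neq 0$ and $v\in\R^N$. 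On $K$ the quantity $A(H(\nabla u))$ is bounded away from $0$ and $\infty$ by \eqref{eq:hp cianchi}, and the operator is uniformly elliptic there. Since $A\in C^{1,\alpha}_{loc}$, $H\in C^{2,\alpha}_{loc}$ and $\nabla u\in C^{0,\alpha}_{loc}$, the coefficients $a^{ij}(\nabla u)$ are locally Hölder continuous in $x$, while $(H_f)$ supplies $f\in C^{0,\alpha}_{loc}(\Omega)$; classical Schauder theory \cite{GT} then yields $u\in C^{2,\alpha}(K)$ and, $K$ being arbitrary, $u\in C^{2}(\{\nabla u\neq 0\})$.

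The substantive technical content is borrowed from the cited works; within our proof the only genuinely new checks are the matching of their hypotheses with ours (which is immediate) and the uniform-ellipticity reduction on $K$, for which Lemma \ref{eq:lemma} is exactly the right tool. For this reason I do not expect any single step to be a serious obstacle: this proposition has the role of assembling known qualitative facts that will serve as the starting point for the quantitative second-order estimates carried out in the rest of the paper.
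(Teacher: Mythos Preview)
Your strategy is sound in outline, but the $C^2$ argument contains a circularity: you propose to expand the divergence and rewrite the equation in non-divergence form $-\sum a^{ij}(\nabla u)\,\partial_{ij}u=f$, then apply Schauder theory. That expansion already presupposes that $u$ has classical second derivatives, which is precisely what you are trying to prove. The paper avoids this by staying in divergence form: on any $K\subset\subset\{\nabla u\neq 0\}$ it first invokes Tolksdorf \cite[Proposition~1]{To} (difference-quotient method for uniformly elliptic divergence-form equations) to obtain $u\in W^{2,2}_{loc}(K)$, and only then upgrades to $C^2$ via the linear theory in Ladyzhenskaya--Ural'tseva \cite[Theorem~6.3]{Ul}. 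Your Schauder route would work once you insert this intermediate $W^{2,2}$ step (after which the equation does hold pointwise a.e.\ and the non-divergence form is justified); as written, the argument is circular.

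A smaller point on the $C^{1,\alpha}_{loc}$ part: Lieberman's theorem \cite[Theorem~1.7]{Li2}, and the anisotropic extensions you cite, require a priori local boundedness of $u$, which is not automatic for solutions in $W^{1,\mathcal A}_{loc}$. The paper supplies this via \cite[Theorem~5.1]{Kor} before invoking Lieberman; you should do the same. Finally, your detour through Sobolev embedding to get $f\in L^q_{loc}$ with $q>N$ is unnecessary (and not obviously correct for all admissible $\gamma,s$): hypothesis $(H_f)$ already includes $f\in C^{0,\alpha}_{loc}\subset L^\infty_{loc}$.
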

\begin{proof}
     By Lemma \ref{eq:lemma} (see \eqref{eq:minoraz}) and \cite[see $(3.29)$]{ACCFM} we obtain
\begin{eqnarray*}
   &&\left[A(H(\xi)) + H(\xi)A'(H(\xi)) \right] \langle \nabla H(\xi),v \rangle^{2} +
   H(\xi) A(H(\xi)) \langle D^{2}H(\xi) v, v \rangle \\
   &\ge& \Tilde{C} A(H(\xi)) |v|^{2}\geq \Tilde{C}\frac{c(m_a,\lambda,\Lambda_2)}{c_2} A(|\xi|) |v|^{2},
 \end{eqnarray*}
therefore hypotheses (1.10) of Lieberman \cite{Li2} are satisfied. Moreover, by \cite[Theorem $5.1$]{Kor} we have that $u\in L^\infty_{loc}(\Omega)$. These permits to exploit \cite[Theorem 1.7]{Li2} to get $u\in C^{1,\alpha}_{loc}(\Omega) $.  \\
Now let us consider a compact set $K\subset\subset (\Omega\setminus Z_u)$, where $Z_u$ denotes the set where the gradient vanishes. By \cite[Proposition 1]{To} we have that $u\in W^{2,2}_{loc}(K)$ (see also \cite[Theorem 2.1]{ACCFM}). Then we may apply
\cite[Theorem 6.3]{Ul}-page 283 to obtain that $u\in C^2(\Omega\setminus Z_u)$.  
\end{proof}

Since we have that $u \in C^{1,\alpha}_{loc}(\Omega) \cap C^{2}(\Omega\setminus Z_{u})$ let us denote by
\begin{equation}
\label{DerSeconde}
\tilde{u}_{ij}(x):=
\begin{cases}
\displaystyle {u}_{x_{i}x_{j}}(x) & x\in \Omega \setminus Z_{u},
\\
0 & x\in Z_{u}.
\end{cases}
\end{equation}
Our first result give us the summability of the second derivative in the above sense.  In order to make the proof readable,  we usually omit the tilde-symbol on the derivatives.

Moreover, in what follows we will denote by $u_i:=\partial_{x_i}u$.\\

The following integrability property is crucial in our method for investigating the regularity of weak solutions to \eqref{eq:forte}.
\begin{thm}  \label{stime derivate seconde}
    Let $u\in W_{loc}^{1,\mathcal A}(\Omega)$ be a weak solution of \eqref{eq:forte}, where $f$ satisfies $(H_f)$.\\
     Fix $x_0 \in \Omega$ and $R>0$ such that $B_{2R}(x_0) \subset \subset \Omega,$ and consider $y \in \Omega.$ Then, for $0 \le \beta < 1$ and $\gamma < N-2$ for $N \ge 3$ ($\gamma = 0$ for $N = 2$), we have:
    \begin{equation}\label{eq:stimadersec}
        \int_{B_{R}(x_0)\setminus Z_{u}} \frac{A(H(\nabla u)) |\nabla u_i|^2}{|x-y|^{\gamma}|u_i|^{\beta}} \ dx \le C \quad \forall \, i=1,...,N,
    \end{equation}
    where $C =C(c_1,c_2, K_1,K_2, m_{A}, M_{A}, x_0, R, \beta, \gamma, \Lambda, \|\nabla u\|_{L^{\infty}_{loc}(\Omega)},\|f\|_{1,\frac{N}{(N-\gamma)^-}})$ is a positive constant.
\end{thm}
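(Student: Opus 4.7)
The plan is to set up a Caccioppoli-type estimate on the equation linearised in the $x_i$-direction. Since $u\in C^2(\Omega\setminus Z_u)$, on $\{\nabla u\ne 0\}$ we may differentiate \eqref{eq:forte} to obtain
$$-\partial_{x_j}\bigl(a_{jl}(\nabla u)\,u_{x_l x_i}\bigr)=f_{x_i},\qquad a_{jl}(\xi):=\partial_{\xi_l}\bigl(A(H(\xi))H(\xi)\partial_{\xi_j}H(\xi)\bigr).$$
A direct computation shows that $a_{jl}(\xi)\zeta_j\zeta_l$ coincides with the left-hand side of \eqref{eq:minoraz} for $v=\zeta$, and a similar expansion matches \eqref{eq:maggioraz}, so Lemma~\ref{eq:lemma} yields
$$a_{jl}(\xi)\zeta_j\zeta_l\ge\tilde{C}\,A(H(\xi))|\zeta|^2,\qquad |a_{jl}(\xi)|\le\bar{C}\,A(H(\xi)),$$
for $\xi\ne 0$ and $\zeta\in\mathbb{R}^N$, providing both the ellipticity and the coefficient bound needed below.

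For small parameters $\varepsilon,\delta>0$ I would test the linearised equation in weak form against
$$\psi\;:=\;G_\varepsilon(u_i)\,\rho_\varepsilon(x)\,\eta^2(x)\,T_\delta(|\nabla u|),$$
where $G_\varepsilon(t):=t(t^2+\varepsilon^2)^{-\beta/2}$ regularises the factor $|u_i|^{-\beta}$, $\rho_\varepsilon(x):=(|x-y|^2+\varepsilon^2)^{-\gamma/2}$ regularises the kernel, $\eta\in C_c^\infty(B_{2R}(x_0))$ is a standard cutoff with $\eta\equiv 1$ on $B_R(x_0)$, and $T_\delta$ is a smooth cutoff supported in $\{|\nabla u|>\delta\}$ which makes $\psi$ admissible and confines all integrations to the set where $u\in C^2$. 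Since $\beta<1$, $G_\varepsilon'(t)\ge(1-\beta)(t^2+\varepsilon^2)^{-\beta/2}$; therefore the principal term on the left dominates a constant multiple of
$$\int A(H(\nabla u))\,|\nabla u_i|^2\,(u_i^2+\varepsilon^2)^{-\beta/2}\,\rho_\varepsilon\,\eta^2\,T_\delta\,dx,$$
which is exactly the quantity we want to bound.

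The remaining contributions come from differentiating $\rho_\varepsilon$, $\eta^2$ and $T_\delta$, together with the source term $\int f_{x_i}\,\psi\,dx$. The $\eta$-piece is standard; the $T_\delta$-piece vanishes as $\delta\to 0$ because it is supported on the shrinking strip $\{\delta<|\nabla u|<2\delta\}$ and the second derivatives are locally square-integrable off $Z_u$ (cf.\ \cite[Theorem 2.1]{ACCFM}). The delicate error is produced by $\nabla\rho_\varepsilon\sim|x-y|^{-\gamma-1}$: combining $|a_{jl}|\le\bar{C}A(H(\nabla u))$ with $|G_\varepsilon(u_i)|\le|u_i|^{1-\beta}$, Young's inequality absorbs half of the principal part and leaves a residual
$$C\int_{B_{2R}(x_0)}A(H(\nabla u))\,|u_i|^{2-\beta}\,|x-y|^{-\gamma-2}\,\eta^2\,dx,$$
which, thanks to the local boundedness of $\nabla u$, is finite exactly when $\gamma<N-2$. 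The source term is controlled via H\"older's inequality by $C\|\nabla f\|_{L^{N/(N-\gamma-s)}(B_{2R}(x_0))}\,\||x-y|^{-\gamma}\|_{L^{N/(\gamma+s)}(B_{2R}(x_0))}$, both factors being finite by $(H_f)$ and the choice $s>0$.

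The main obstacle is precisely this balancing of the weight-derivative term: the exponent $-\gamma-1$ coming from differentiating $|x-y|^{-\gamma}$ must be absorbed into the principal energy without leaving a non-integrable residual, and the correct splitting in Young's inequality reproduces the threshold $\gamma<N-2$ built into the statement. Once the absorption is performed, letting $\delta\to 0$ removes the gradient cutoff and then Fatou's lemma on the left together with dominated convergence on the right as $\varepsilon\to 0$ deliver \eqref{eq:stimadersec}, with a constant depending on the listed quantities.
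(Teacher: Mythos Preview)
Your overall strategy coincides with the paper's (linearise, apply Lemma~\ref{eq:lemma}, test against a product of a $u_i$-factor, a regularised kernel and a spatial cutoff, absorb via Young), and your handling of the kernel-derivative and source terms is correct. The gap is in the extra gradient cutoff $T_\delta(|\nabla u|)$. Differentiating $T_\delta$ produces an error supported on the strip $\{\delta<|\nabla u|<2\delta\}$ which, after the obvious bounds $|T_\delta'|\le C/\delta$ and $|G_\varepsilon(u_i)|\le C_\varepsilon|u_i|\le 2C_\varepsilon\delta$, is controlled by $C_\varepsilon\int_{\{\delta<|\nabla u|<2\delta\}} A(H(\nabla u))\,|D^2u|^2\,dx$. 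For this to vanish as $\delta\to 0$ you would need $A(H(\nabla u))|D^2u|^2\in L^1_{\mathrm{loc}}(\Omega)$ \emph{across} $Z_u$; the cited $W^{2,2}_{\mathrm{loc}}(\Omega\setminus Z_u)$ regularity says nothing here because the strip is not compactly contained in $\Omega\setminus Z_u$. But that $L^1$ bound is exactly the $\beta=\gamma=0$ instance of the estimate you are trying to prove, so the argument is circular as written.

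The paper avoids this by \emph{not} introducing a separate gradient cutoff: its $u_i$-factor is $T_\epsilon(u_i)=G_\epsilon(u_i)/|u_i|^{\beta}$ with the piecewise-linear $G_\epsilon$ of \eqref{eq:Geps}, which vanishes for $|u_i|\le\epsilon$. Since $|u_i|>\epsilon$ forces $|\nabla u|>\epsilon$, the test function is automatically supported away from $Z_u$, no strip-error appears, and one checks $T_\epsilon'\ge 0$ together with $G_\epsilon'(u_i)-(\beta+\theta)G_\epsilon(u_i)/u_i\to 1-\beta-\theta>0$, so a single application of Fatou in $\epsilon$ finishes. Replacing your smooth $G_\varepsilon$ by one vanishing near the origin and dropping $T_\delta$ repairs the proof.
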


\begin{proof}
    For $\varphi \in C^{\infty}_{c}(\Omega\setminus Z_{u})$, let us denote $\varphi_i:=\partial_{x_i}\varphi$. Using $\varphi_i$ as a test function in \eqref{eq:debole}, since $f(x)\in W^{1,\frac{N}{(N-\gamma)^-}}(\Omega)$, integrating by part we have
   \begin{eqnarray}\label{linearizzato}
      \nonumber   L_{u}(u_i,\varphi) &:=& \int_{\Omega} A(H(\nabla u)) \langle \nabla H(\nabla u) , \nabla u_i \rangle \langle \nabla H(\nabla u) , \nabla \varphi \rangle \ dx \\ &&+ \int_{\Omega} A'(H(\nabla u)) H(\nabla u) \langle \nabla H(\nabla u) , \nabla u_i \rangle \langle \nabla H(\nabla u) , \nabla \varphi \rangle \ dx \\ \nonumber &&+ \int_{\Omega} A(H(\nabla u)) H(\nabla u) \langle D^{2} H(\nabla u) \nabla u_i, \nabla \varphi \rangle \ dx  - \int_{\Omega} f_i \varphi \ dx = 0.
   \end{eqnarray}
    Let us now take an arbitrary $\epsilon > 0$ and define for $t\geq 0$:
    \begin{equation} \label{eq:Geps}
        G_{\epsilon}(t) := 
        \begin{cases}
            0               & \text{if} \quad t \in [0, \epsilon]\\
            (2t -2\epsilon) & \text{if} \quad t \in [\epsilon, 2\epsilon]\\
            t               & \text{if} \quad t \in [2\epsilon, \infty),\\
        \end{cases}
    \end{equation}
    while $G_{\epsilon}(t) := -G_{\epsilon}(-t)$ if $t \le 0.$ Moreover we consider a cut-off function $\varphi_{R} := \varphi \in C^{\infty}_{c}(B_{2R}(x_0))$ such that:
    \begin{equation} \label{eq:varphi}
        \begin{cases}
        \varphi = 1 & \text{in} \quad B_{R}(x_0)\\
        |\nabla \varphi| \le \frac{2}{R} & \text{in} \quad B_{2R}(x_0)\setminus B_{R}(x_0)\\
        \varphi = 0 & \text{in} \quad B_{2R}(x_0)^{c}.\\
    \end{cases}
    \end{equation}
    For $0 \le \beta < 1,$ $\gamma < N-2$ if $N \ge 3$ \footnote{$\gamma = 0$ if $ N = 2$.} and for every $\epsilon,\delta > 0$ we set:
    \begin{equation} \label{eq:Hdelta}
        T_{\epsilon}(t) := \frac{G_{\epsilon}(t)}{|t|^{\beta}}, \quad \quad H_{\delta}(t) := \frac{G_{\delta}(t)}{|t|^{\gamma +1}}.
    \end{equation}
    Next, we test \eqref{linearizzato} by the function
    \begin{equation}
        \psi(x) := T_{\epsilon}(u_i)H_{\delta}(|x-y|)\varphi^{2}(x)
    \end{equation}
 obtaining:
    \begin{equation}\label{l1}
    \begin{split}
        & \int_{\Omega} A(H(\nabla u)) \langle \nabla H(\nabla u) , \nabla u_i \rangle \langle \nabla H(\nabla u) , \nabla u_i \rangle T'_{\epsilon}(u_i) H_{\delta} \varphi^{2} \ dx \\ 
        &+ \int_{\Omega}  A(H(\nabla u)) \langle \nabla H(\nabla u) , \nabla u_i \rangle \langle \nabla H(\nabla u) , \nabla H_{\delta} \rangle T_{\epsilon}(u_i)  \varphi^{2} \ dx \\ 
        &+ 2 \int_{\Omega}  A(H(\nabla u)) \langle \nabla H(\nabla u) , \nabla u_i \rangle \langle \nabla H(\nabla u) , \nabla \varphi \rangle T_{\epsilon}(u_i) H_{\delta}  \varphi \ dx \\ 
        &+ \int_{\Omega} H(\nabla u) A'(H(\nabla u)) \langle \nabla H(\nabla u) , \nabla u_i \rangle \langle \nabla H(\nabla u) , \nabla u_i \rangle T'_{\epsilon}(u_i) H_{\delta} \varphi^{2} \ dx \\ 
        &+ \int_{\Omega} H(\nabla u) A'(H(\nabla u)) \langle \nabla H(\nabla u) , \nabla u_i \rangle \langle \nabla H(\nabla u) , \nabla H_{\delta} \rangle T_{\epsilon}(u_i) \varphi^{2} \ dx \\ 
        &+2 \int_{\Omega} H(\nabla u) A'(H(\nabla u)) \langle \nabla H(\nabla u) , \nabla u_i \rangle \langle \nabla H(\nabla u) , \nabla \varphi \rangle T_{\epsilon}(u_i) H_{\delta} \varphi \ dx \\
        &+ \int_{\Omega} H(\nabla u) A(H(\nabla u)) \langle D^{2}H(\nabla u) \nabla u_i, \nabla u_i \rangle T'_{\epsilon}(u_i) H_{\delta} \varphi^{2} \ dx \\ 
        &+ \int_{\Omega} H(\nabla u) A(H(\nabla u)) \langle D^{2}H(\nabla u) \nabla u_i, \nabla H_{\delta} \rangle T_{\epsilon}(u_i) \varphi^{2} \ dx \\ 
        &+2 \int_{\Omega} H(\nabla u) A(H(\nabla u)) \langle D^{2}H(\nabla u) \nabla u_i, \nabla \varphi \rangle T_{\epsilon}(u_i) H_{\delta}  \varphi \ dx = \int_{\Omega} f_i T_{\epsilon}(u_i) H_{\delta} \varphi^{2} \ dx .
    \end{split}
    \end{equation}
   Next we have to estimate every integral in the formula above, hence let us rename:
    \begin{equation}
    \begin{split}
        & I_{1} := \int_{\Omega} A(H(\nabla u)) |\langle \nabla H(\nabla u), \nabla u_i \rangle|^{2} T'_{\epsilon}(u_i) H_{\delta} \varphi^{2} \ dx \\ & 
        I_{2} := \int_{\Omega} H(\nabla u) A'(H(\nabla u)) |\langle \nabla H(\nabla u), \nabla u_i \rangle|^{2} T'_{\epsilon}(u_i) H_{\delta} \varphi^{2} \ dx \\ 
        &
        I_{3} := \int_{\Omega} H(\nabla u) A(H(\nabla u)) \langle D^{2} H(\nabla u) \nabla u_i, \nabla u_i \rangle T'_{\epsilon}(u_i) H_{\delta} \varphi^{2} \ dx \\ &
        I_{4} := \int_{\Omega}  A(H(\nabla u)) \langle \nabla H(\nabla u) , \nabla u_i \rangle \langle \nabla H(\nabla u) , \nabla H_{\delta} \rangle T_{\epsilon}(u_i) \varphi^{2} \ dx \\ &
        I_{5} :=2 \int_{\Omega}  A(H(\nabla u)) \langle \nabla H(\nabla u) , \nabla u_i \rangle \langle \nabla H(\nabla u) , \nabla \varphi \rangle T_{\epsilon}(u_i) H_{\delta} \varphi \ dx \\ &
        I_{6} := \int_{\Omega} H(\nabla u) A'(H(\nabla u)) \langle \nabla H(\nabla u) , \nabla u_i \rangle \langle \nabla H(\nabla u) , \nabla H_{\delta} \rangle T_{\epsilon}(u_i) \varphi^{2} \ dx \\ &
        I_{7} :=2 \int_{\Omega} H(\nabla u) A'(H(\nabla u)) \langle \nabla H(\nabla u) , \nabla u_i \rangle \langle \nabla H(\nabla u) , \nabla \varphi \rangle T_{\epsilon}(u_i) H_{\delta} \varphi \ dx \\ &
        I_{8} := \int_{\Omega} H(\nabla u) A(H(\nabla u)) \langle D^{2}H(\nabla u) \nabla u_i, \nabla H_{\delta} \rangle T_{\epsilon}(u_i) \varphi^{2} \ dx \\ &
        I_{9} :=2 \int_{\Omega} H(\nabla u) A(H(\nabla u)) \langle D^{2}H(\nabla u) \nabla u_i, \nabla \varphi \rangle T_{\epsilon}(u_i) H_{\delta} \varphi \ dx \\ &
        I_{10} := \int_{\Omega} f_i T_{\epsilon}(u_{i}) H_{\delta} \varphi^{2}\ dx,
    \end{split}
    \end{equation}
in such a way that \eqref{l1} is rewritten as
$$
 I_1+I_2+I_3=I_{10}-\sum_{k=4}^9 I_k.
$$
    Note that, by Lemma \ref{eq:lemma} (see in particular \eqref{eq:minoraz}), and since $T'_\varepsilon(t)\ge 0$, for $\beta<1$, we have:
    \begin{equation} \label{eq:sx}
        \Tilde{C}(c_1, m_{A},\Lambda) \int_{\Omega} A(H(\nabla u)) |\nabla u_i|^{2} T'_{\epsilon}(u_i) H_{\delta} \varphi^{2} \ dx \le I_{1} + I_{2} + I_{3}.
    \end{equation}

By \eqref{eq:maggioraz} in Lemma \eqref{eq:lemma}, by definition of $T_\varepsilon$ and using a weighted Young inequality we get:
    \begin{eqnarray}\label{eq:4-6-8}
       \nonumber  -(I_{4} + I_{6} + I_{8}) &=& -\int_{\Omega}  \, \big [(A(H(\nabla u))+H(\nabla u) A'(H(\nabla u))) \langle \nabla H(\nabla u),\nabla u_i \rangle \langle \nabla H(\nabla u),\nabla H_{\delta} \rangle \\
        \nonumber &&\qquad + H(\nabla u) A(H(\nabla u)) \langle D^{2}H(\nabla u) \nabla u_i, \nabla H_{\delta} \rangle \big ] T_{\epsilon}(u_i) \varphi^{2}  \ dx \\ 
\nonumber &\le& \Bar{C}(c_2, K_1, K_2, M_{A}) \int_{\Omega} A(H(\nabla u)) |\nabla u_i||\nabla H_{\delta}| |T_{\epsilon}(u_i)| \varphi^2 \ dx \\ 
\nonumber & \le& \tilde{C}\int_{\Omega} A(H(\nabla u)) |\nabla u_i| \frac{|T_{\epsilon}(u_i)|}{|x-y|^{\gamma + 1}}\varphi^{2}  \ dx \\ 
&\le&  \tilde{C}\int_{\Omega} \frac{A^{\frac{1}{2}}(H(\nabla u))|\nabla u_i|(|G_\varepsilon(u_i)|)^{\frac 12} \varphi}{|x-y|^{\frac{\gamma}{2}}|u_i|^{\frac{\beta+1}{2}}}\  \frac{A^{\frac{1}{2}}(H(\nabla u))|u_i|(|G_\varepsilon(u_i)|)^{\frac 12} \varphi}{|x-y|^{\frac{\gamma +2}{2}}|u_i|^{\frac{\beta+1}{2}}} \ dx \\ 
\nonumber &\le& \theta \int_{\Omega} \frac{A(H(\nabla u))|\nabla u_i|^{2}|G_\varepsilon(u_i)|}{|x-y|^{\gamma}|u_i|^{\beta}|u_i|} \varphi^{2} + C \int_{\Omega} \frac{A(H(\nabla u))|u_i|^{2-\beta}}{|x-y|^{\gamma + 2}} \varphi^{2}  \ dx \\ 
\nonumber &\le& \theta \int_{\Omega} \frac{A(H(\nabla u))|\nabla u_i|^{2}G_\varepsilon(u_i)}{|x-y|^{\gamma}|u_i|^{\beta}u_i} \varphi^{2} + C \int_{\Omega} \frac{A(H(\nabla u))|\nabla u|^{2-\beta}}{|x-y|^{\gamma + 2}} \varphi^{2} \ dx ,
    \end{eqnarray}
    where in the last inequality we use that $\displaystyle \frac{|G_\varepsilon(u_i)|}{|u_i|}=\frac{G_\varepsilon(u_i)}{u_i}$ and denoting by $C(c_2,K_1,K_2,M_A,\gamma,\theta)$ a positive constant. Now we set $B := B_{R}(x_0),$ and 
\begin{equation} \label{eq:M}
    M_\gamma := \max \biggl\{ \sup_{y \in \Omega} \int_{B} \frac{1}{|x-y|^{\gamma}} \ dx , \ \sup_{y \in \Omega} \int_{B} \frac{1}{|x-y|^{\gamma + 2}}  \ dx \biggr\}.
\end{equation}

We note that $M_\gamma$ does not depend on $y$.    
By \eqref{eq:hp cianchi}-\eqref{sigma}, we have that the function $t \mapsto tA(t)$ is locally bounded, so:
\begin{eqnarray}\label{zio}
    \nonumber \int_{\Omega} \frac{A(H(\nabla u))|\nabla u|^{2-\beta}}{|x-y|^{\gamma + 2}} \varphi^{2} \ dx  \le C(\|\nabla u\|_{L^{\infty}_{loc}(\Omega)}) \int_{B} \frac{1}{|x-y|^{\gamma + 2}} \ dx \le  C(m_A, M_A, M_\gamma,\|\nabla u\|_{L^{\infty}_{loc}(\Omega)}) \ \\,
\end{eqnarray}
where $C(m_A, M_A, M_\gamma,\|\nabla u\|_{L^{\infty}_{loc}(\Omega)})$ is a positive constant.

So by \eqref{eq:4-6-8} and \eqref{zio} we get:
\begin{equation}\label{ziocarlo}
     -(I_{4} + I_{6} + I_{8}) \le \theta  \int_{\Omega} \frac{A(H(\nabla u))|\nabla u_i|^{2}G_\varepsilon (u_i)}{|x-y|^{\gamma}|u_i|^{\beta}u_i} \varphi^{2}  \ dx + C,
\end{equation}
where $C(c_2,K_1,K_2,m_a, M_A,M_\gamma,\gamma,\theta,\|\nabla u\|_{L^{\infty}_{loc}(\Omega)})$ is a positive constant.

In a similar way we can estimate $I_5+I_7+I_9$.
\begin{eqnarray}\label{eq:5-7-9}
      \nonumber  -(I_{5} + I_{7} + I_{9}) &=& -2 \int_{\Omega}  \big[(A(H(\nabla u))+ H(\nabla u)A'(H(\nabla u))) \langle \nabla H(\nabla u), \nabla u_i \rangle \langle \nabla H(\nabla u), \nabla \varphi \rangle  \\
    \nonumber &&\qquad + H(\nabla u) A(H(\nabla u)) \langle D^{2}H(\nabla u) \nabla u_i, \nabla \varphi \rangle \big ] T_{\epsilon}(u_i) H_{\delta} \varphi  \ dx \\ 
   &\le& \tilde{C}( c_2,K_1,K_2,M_{A}) \int_{\Omega} A(H(\nabla u)) |\nabla u_i| |\nabla \varphi| |T_{\epsilon}(u_i)| H_{\delta} \varphi  \ dx \\ 
\nonumber &\le& \theta  \int_{\Omega} \frac{A(H(\nabla u))|\nabla u_i|^{2}G_\varepsilon (u_i)}{|x-y|^{\gamma}|u_i|^{\beta}u_i} \varphi^{2} + \bar{C},
\end{eqnarray}

where $\bar{C}(c_2,K_1,K_2,m_A, M_A,M_\gamma,\gamma,\theta,\|\nabla u\|_{L^{\infty}_{loc}(\Omega)})$ is a positive constant.

Finally we estimate the term $I_{10}$. By definition of $T_{\epsilon}$ and $H_\delta$ (see \eqref{eq:Hdelta}), we have 
\begin{equation}\label{eq:I_10}
    \begin{split}
         I_{10} &\le \int_{\Omega} \ |f_i| |T_{\epsilon}(u_i)| H_{\delta} \varphi^{2}  \ dx \le \int_{\Omega} \frac{|f_i| |T_{\epsilon}(u_i)|}{|x-y|^{\gamma}} \varphi^{2}  \ dx  \\
        &\le \int_{\Omega} \frac{|f_i| |u_i|^{1-\beta}}{|x-y|^{\gamma}} \varphi^{2}  \ dx  \le \bar{C},
    \end{split}
\end{equation}
   where $\bar{C}(M_\gamma , \|f\|_{1,\frac{N}{(N-\gamma)^-}} \, \|\nabla u\|_{L^{\infty}_{loc}(\Omega)})$ is a positive constant.

By \eqref{eq:sx}, \eqref{ziocarlo}, \eqref{eq:5-7-9}, \eqref{eq:I_10}  we obtain:
\begin{eqnarray*}
   \Tilde{C}(c_1,m_{A},\Lambda) \int_{\Omega} A(H(\nabla u)) |\nabla u_i|^{2} T'_{\epsilon}(u_i) H_{\delta} \varphi^{2} \ dx 
\leq  \theta \int_{\Omega} \frac{A(H(\nabla u))|\nabla u_i|^{2}G_\varepsilon (u_i)}{|x-y|^{\gamma}|u_i|^{\beta}u_i} \varphi^{2}+C.
\end{eqnarray*}
Passing $\delta\to 0$,  by Fatou Lemma, we obtain:
\begin{equation}
  \Tilde{C}(m_{A},c_1,\Lambda) \int_{\Omega} \frac{A(H(\nabla u)) |\nabla u_i|^{2}}{|x-y|^{\gamma}|u_i|^{\beta}} \biggl( G'_{\epsilon}(u_i) -(\beta+ \theta) \frac{G_{\epsilon}(u_i)}{u_i} \biggr) \varphi^{2}  \ dx  \le C.
\end{equation} 
    
Now choosing $\theta$ sufficiently small such that $1-\beta-\theta>0$, since for $\epsilon \to 0$,
\begin{equation*}
   \biggl( G'_{\epsilon}(u_i) -(\beta+ \theta) \frac{G_{\epsilon}(u_i)}{u_i} \biggr) \to 1-\beta-\theta,
\end{equation*}
by Fatou Lemma,  we get
\begin{equation}
    \int_{B_{R}(x_0)\setminus Z_{u}} \frac{A(H(\nabla u)) |\nabla u_i|^{2}}{|x-y|^{\gamma} |u_i|^{\beta}}  \ dx    \le C,
\end{equation}
where $C := C(c_1,c_2, K_1,K_2, m_{A}, M_{A},x_0, R, \theta, \gamma, \beta, \Lambda, \|\nabla u\|_{L^{\infty}_{loc}(\Omega)},\|f\|_{1,\frac{N}{(N-\gamma)^-}}). $ 
\end{proof}

As a consequence of result above we get information about the regularity of the stress field of \eqref{eq:forte}. To make readable the proof of Theorem \ref{eq:campovettoriale}, we consider each statement of it as an autonomous proposition that next we are going to prove.

\begin{prop}\label{eq:campovettoriale1}
    Let $u\in W_{loc}^{1,\mathcal A}(\Omega)$ be a weak solution of \eqref{eq:forte}, where $f$ satisfies $(H_f)$. \newline 
  If $0 < m_{A} < M_{A},$ then 
    \begin{equation} \label{eq:ma posit}
        A(H(\nabla u))^{\frac{k-1}{m_{A}}} \, H(\nabla u)\nabla H(\nabla u)\in W^{1,2}_{loc}(\Omega,\R^{N}),
    \end{equation}
    for any $\displaystyle k >1+ \frac{m_{A}}{2} - \frac{m_{A}}{2M_{A}}.$ \newline
\end{prop}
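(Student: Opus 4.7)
Set $V(\xi):=A(H(\xi))^{(k-1)/m_A}\,H(\xi)\,\nabla H(\xi)$ and $\alpha:=2(k-1)/m_A$; the hypothesis $k>1+m_A/2-m_A/(2M_A)$ is precisely $\alpha>1-1/M_A$. Since $V$ extends continuously to $\R^N$ with $V(0)=0$ and $\nabla u\in L^\infty_{loc}(\Omega)$, we have $V(\nabla u)\in L^\infty_{loc}\subset L^2_{loc}$, so only the existence and $L^2_{loc}$-membership of the weak gradient require proof. On the open set $\{\nabla u\ne 0\}$ where $u\in C^2$, the chain rule combined with $tA'(t)\le M_A A(t)$, \eqref{grad H limitato}, \eqref{eq:Hess H limitato} and \eqref{H equiv euclidea} yields $|\nabla_\xi V(\xi)|\le C\,A(H(\xi))^{(k-1)/m_A}$ with $C=C(k,m_A,M_A,c_2,K_1,K_2)$, hence
\begin{equation}\label{eq:plan-point}
|\nabla(V(\nabla u))|^2 \le C\,A(H(\nabla u))^\alpha\,|D^2u|^2 \qquad \text{on } \{\nabla u\ne 0\}.
\end{equation}

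\textbf{Integral bound (key step).} For $B_R(x_0)\subset\subset\Omega$ and each $i=1,\dots,N$, I would establish
\begin{equation}\label{eq:plan-int}
\int_{B_R(x_0)\setminus Z_u} A(H(\nabla u))^\alpha\,|\nabla u_i|^2\,dx \le C,
\end{equation}
by applying Theorem \ref{stime derivate seconde} with $\gamma=0$ and a carefully chosen $\beta\in[0,1)$. If $\alpha\ge 1$ (i.e. $k\ge 1+m_A/2$), then $A(H(\nabla u))^{\alpha-1}$ is locally bounded and $\beta=0$ works. In the delicate range $\alpha\in(1-1/M_A,\,1)$, set $\mu:=M_A(1-\alpha)\in(0,1)$; \eqref{eq:hp cianchi} gives $A(t)\ge A(1)\,t^{M_A}$ for $t\in(0,1]$, and combined with the positivity of $A$ on $[1,\|\nabla u\|_\infty]$ one has $A(t)^{\alpha-1}\le C_1\,t^{-\mu}$ on $(0,\|\nabla u\|_\infty]$. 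Since $|u_i|\le |\nabla u|\le H(\nabla u)/c_1$ by \eqref{H equiv euclidea}, for any fixed $\beta\in[\mu,1)$ one obtains
\[
A(H(\nabla u))^{\alpha-1} \le C_1\,H(\nabla u)^{-\mu} \le C_2\,\frac{H(\nabla u)^{\beta-\mu}}{|u_i|^\beta} \le \frac{C_3}{|u_i|^\beta},
\]
and inserting this into the left-hand side of \eqref{eq:plan-int} and invoking Theorem \ref{stime derivate seconde} with this $\beta$ closes the estimate. The constraint $\beta<1$ is exactly the reason we require $\mu<1$, i.e. the given threshold on $k$.

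\textbf{From the pointwise bound to the weak gradient.} To pass from \eqref{eq:plan-point}--\eqref{eq:plan-int} to a genuine $W^{1,2}_{loc}$ statement I would truncate near $Z_u$: pick $\phi_\rho\in C^\infty([0,\infty))$ with $\phi_\rho\equiv 0$ on $[0,\rho]$, $\phi_\rho\equiv 1$ on $[2\rho,\infty)$ and $|\phi'_\rho|\le 2/\rho$, and set $V_\rho(\xi):=\phi_\rho(H(\xi))\,V(\xi)\in C^1(\R^N)$, so that $V_\rho(\nabla u)\in W^{1,\infty}_{loc}(\Omega)$ with
\[
\partial_j(V_\rho(\nabla u)) = \phi_\rho(H)\,\partial_j(V(\nabla u)) + \phi'_\rho(H)\,\langle\nabla H,\nabla u_j\rangle\,V(\nabla u)
\]
on $\{\nabla u\ne 0\}$ and $0$ elsewhere. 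As $\rho\to 0$ the first summand tends to $\partial_j(V(\nabla u))$ (extended by zero on $Z_u$) in $L^2_{loc}$ by dominated convergence, with the majorant coming from \eqref{eq:plan-point}--\eqref{eq:plan-int}. For the commutator, $|V|^2\le K_1^2\,A(H)^\alpha H^2$, and on its support $\{\rho<H<2\rho\}$ one has $|\phi'_\rho|^2 H^2\le 16$, whence
\[
\int |\phi'_\rho(H)\langle\nabla H,\nabla u_j\rangle V(\nabla u)|^2\,dx \le C\int_{\{\rho<H(\nabla u)<2\rho\}} A(H(\nabla u))^\alpha\,|D^2u|^2\,dx \longrightarrow 0,
\]
since the integrand is $L^1$ on $\{H(\nabla u)>0\}$ by \eqref{eq:plan-int} and the annuli shrink as $\rho\to 0$. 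Passing to the limit then identifies the weak gradient and proves the proposition.

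\textbf{Main obstacle.} The non-routine step is the integral estimate \eqref{eq:plan-int}: balancing the decay of $A$ from below (via $A(t)\ge A(1)t^{M_A}$) against the weight $|u_i|^{-\beta}$ admitted by Theorem \ref{stime derivate seconde} forces $M_A(1-\alpha)<1$, which is exactly the sharp threshold $k>1+m_A/2-m_A/(2M_A)$.
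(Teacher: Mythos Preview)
Your proposal is correct and follows essentially the same strategy as the paper: truncate the stress-type field near $Z_u$, obtain the pointwise bound $|\nabla V|^2\le C\,A(H(\nabla u))^{2(k-1)/m_A}|D^2u|^2$, and reduce to Theorem~\ref{stime derivate seconde} via \eqref{eq:hp cianchi}, which is exactly where the threshold $k>1+\tfrac{m_A}{2}-\tfrac{m_A}{2M_A}$ appears. The only differences are cosmetic: the paper truncates with the piecewise-linear $J_\epsilon(|\nabla u|)=G_\epsilon(|\nabla u|)/|\nabla u|$ and passes to the limit by weak compactness in $W^{1,2}_{loc}$, whereas you use a smooth cutoff $\phi_\rho(H(\nabla u))$ and pass to the limit by dominated convergence plus a vanishing annular remainder; and the paper splits the integral according to $\{H(\nabla u)\lessgtr 1\}$ rather than according to the sign of $\alpha-1$, but the underlying use of \eqref{eq:hp cianchi} is identical.
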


\begin{proof}
    For $\epsilon > 0$ define
    \begin{equation} \label{eq:stress field}
        V_{\epsilon,j} := A(H(\nabla u))^{\frac{k-1}{m_{A}}} H(\nabla u)H_{\eta_{j}}(\nabla u) J_{\epsilon}(|\nabla u |),
    \end{equation}
    where $\displaystyle J_{\epsilon}(t) := \frac{G_{\epsilon}(t)}{t}$, and $J_\epsilon(0)=0$, $\displaystyle H_{\eta_{j}} := \frac{\partial H}{\partial \eta_{j}}$ and $H(\eta) = H(\eta_1,...,\eta_n)$. Since $ J_{\epsilon}(|\nabla u|)=0$ whenever $|\nabla u|\leq \epsilon$,  we have
    \begin{eqnarray}\label{eq:gradstress}
       \nonumber  \frac{\partial V_{\epsilon,j}}{\partial x_i} &=& \left( \frac{k-1}{m_A} \right) A(H(\nabla u))^{\frac{k-1}{m_A}-1} A^{'}(H(\nabla u)) H(\nabla u) \langle \nabla H(\nabla u),\nabla u_i \rangle J_{\epsilon}(|\nabla u|) H_{\eta_{j}}(\nabla u)  \\
        &&+ A(H(\nabla u))^{\frac{k-1}{m_A}} \langle \nabla H(\nabla u),\nabla u_i \rangle J_{\epsilon}(|\nabla u|) H_{\eta_{j}}(\nabla u) \\
       \nonumber  &&+ A(H(\nabla u))^{\frac{k-1}{m_A}} H(\nabla u) J^{'}_{\epsilon}(|\nabla u|) \partial_{x_i} \left(|\nabla u|\right) H_{\eta_j}(\nabla u)\\
        \nonumber &&+ A(H(\nabla u))^{\frac{k-1}{m_A}} H(\nabla u) J_{\epsilon}(|\nabla u|) \langle \nabla H_{\eta_j}(\nabla u), \nabla u_i\rangle,
    \end{eqnarray}
whenever $|\nabla u|>\epsilon$.
    Using \eqref{eq:hp principale}, \eqref{grad H limitato}, \eqref{eq:Hess H limitato} and the definition of $J_{\epsilon}$ we get:
    \begin{equation}\label{eq:grad}
    \begin{split}
        \left| \frac{ \partial V_{\epsilon,j}}{\partial x_i} \right| &\le C(k,K_1, m_A,M_A) A(H(\nabla u))^{\frac{k-1}{m_A}} |\nabla u_i| \chi_{\{|\nabla u|>\epsilon\}} \\
        &\quad +K_1A(H(\nabla u))^{\frac{k-1}{m_A}} H(\nabla u) |J^{'}_{\epsilon}(|\nabla u|)| |\nabla u_i|\\
        &\quad + C(c_2,K_2) A(H(\nabla u))^{\frac{k-1}{m_A}} |\nabla u_i|\chi_{\{|\nabla u|>\epsilon\}}\le C A(H(\nabla u))^{\frac{k-1}{m_A}} |\nabla u_i|\chi_{\{|\nabla u|>\epsilon\}} ,
    \end{split}
    \end{equation}
    where $C=C(c_2, k,K_1, K_2, m_A,M_A)$ is a positive constant. Indeed we note that
    \begin{eqnarray*}
    	A(H(\nabla u))^{\frac{k-1}{m_A}} H(\nabla u) |J^{'}_{\epsilon}(|\nabla u|)| |\nabla u_i|=0, \ a.e.
    \end{eqnarray*}
    if $|\nabla u|\in [0,\epsilon]\cup[2\epsilon,+\infty)$. Whenever $|\nabla u|\in (\epsilon,2\epsilon)$, since $\displaystyle J'(t)=\frac{2\epsilon}{t^2}$ and using \eqref{H equiv euclidea} we have
    \begin{eqnarray*}
    	&&K_1 A(H(\nabla u))^{\frac{k-1}{m_A}} H(\nabla u) |J^{'}_{\epsilon}(|\nabla u|)| |\nabla u_i|\leq c_2K_1A(H(\nabla u))^{\frac{k-1}{m_A}}|\nabla u|\frac{2\epsilon}{|\nabla u|^2}|\nabla u_i|\\
    	&&\mbox{(by $|\nabla u|>\epsilon$)}\leq 2c_2K_1A(H(\nabla u))^{\frac{k-1}{m_A}}|\nabla u_i|\leq  C(c_2,K_1)A(H(\nabla u))^{\frac{k-1}{m_A}}|\nabla u_i|\chi_{\{|\nabla u|>\epsilon\}}. 
    \end{eqnarray*} 
Therefore
$$
 K_1A(H(\nabla u))^{\frac{k-1}{m_A}} H(\nabla u) |J^{'}_{\epsilon}(|\nabla u|)| |\nabla u_i|\leq  C(c_2,K_1)A(H(\nabla u))^{\frac{k-1}{m_A}}|\nabla u_i|\chi_{\{|\nabla u|>\epsilon\}}, 
$$
and \eqref{eq:grad} holds. \\   
    Fixed $x_0 \in \Omega$ and $R>0$ such that $B := B_{R}(x_0) \subset \subset \Omega$, using  \eqref{eq:grad} and \eqref{eq:hp cianchi} we get
    \begin{eqnarray}
     \nonumber  \int_{B} |\nabla V_{\epsilon,j}|^{2} \, dx &\le& C(c_2, k, K_1, K_2, m_A,M_A) \int_{B} A(H(\nabla u))^{\frac{2(k-1)}{m_{A}}}|D^{2}u|^{2}\chi_{\{|\nabla u|>\epsilon\}}   \, dx \\
        &\le& C \int_{B} A(H(\nabla u))^{\frac{2(k-1)}{m_{A}} -1} \frac{A(H(\nabla u)) |D^{2}u|^{2}}{|u_i|^{\beta}} |\nabla u|^{\beta} \chi_{\{|\nabla u|>\epsilon\}} \, dx \\
        \nonumber &\le& C \int_{B} A(H(\nabla u))^{\frac{2(k-1)}{m_{A}} -1} \frac{A(H(\nabla u)) |D^{2}u|^{2}}{|u_i|^{\beta}} H(\nabla u)^{\beta}\chi_{\{|\nabla u|>\epsilon\}}\, dx \\
      \nonumber  &\le& C\int_{B \cap H_{>}} A(H(\nabla u))^{\frac{2(k-1)}{m_{A}} -1} H(\nabla u)^{\beta} \frac{A(H(\nabla u)) |D^{2}u|^{2}}{|u_i|^{\beta}} \ \, dx \\
        \nonumber &&+ C\int_{B \cap H_{<}} A(H(\nabla u))^{\frac{2(k-1)}{m_{A}} -1} A(H(\nabla u))^{\frac{\beta}{M_A}}\frac{A(H(\nabla u)) |D^{2}u|^{2}}{|u_i|^{\beta}}\chi_{\{|\nabla u|>\epsilon\}}  \, dx. 
    \end{eqnarray}
    where $H_{<}:=\{H(\nabla u) \le 1\}$, $H_{>}:=\{H(\nabla u) \ge 1\}$ and $C=C(c_1,c_2,k, K_1,K_2, m_A,M_A)>0$.\\
        Now, since $u\in C^2(\Omega\setminus Z_u)$ and by Theorem \ref{stime derivate seconde}, choosing $\beta \sim 1^{-},$ for any $k >  1+\frac{m_A}{2} - \frac{m_A}{2M_A}$ we get
    \begin{equation}\label{3.47}
        \begin{split}
            \int_{B} |\nabla V_{\epsilon,j}|^{2} &\le C+  C \int_{{(B \cap H_{<})\setminus Z_u}} \frac{A(H(\nabla u)) |D^{2}u|^{2}}{|u_i|^{\beta}}  \, dx \\
            &\le   C(c_1,c_2, \hat C, k, K_1,K_2, m_A,M_A,\|\nabla u\|_{L^{\infty}_{loc}(\Omega)}), \\
        \end{split}
    \end{equation}
    where $C(c_1,c_2,\hat C, k, K_1,K_2, m_A,M_A,\|\nabla u\|_{L^{\infty}_{loc}(\Omega)})$ is a positive constant and $\Hat{C}$ is given by Theorem~\ref{stime derivate seconde}. \\
    Since $W^{1,2}_{loc}(\Omega)$ is a reflexive space, there exists $\Tilde{V}_j \in W^{1,2}_{loc}(\Omega)$ such that
    \begin{equation}\label{convdeb}
        V_{\epsilon,j} \rightharpoonup \Tilde{V}_j, \quad \text{for  } \epsilon \rightarrow 0.
    \end{equation}
    By the compact embedding, $V_{\epsilon,j} \rightarrow \Tilde{V}_j$ in $L^{q}(\Omega)$ with $q<2^{*}$ and up to subsequence $V_{\epsilon,j} \rightarrow \Tilde{V}_j$ a.e. in $\Omega$. 
    Because of the choice of $k$, it is easy to verify that $V_{\epsilon,j} \rightarrow  A(H(\nabla u))^{\frac{k-1}{m_{A}}} \, H(\nabla u)H_{\eta_{j}}(\nabla u)$ a.e. in $\Omega$, then $$\tilde V_j= A(H(\nabla u))^{\frac{k-1}{m_{A}}} \, H(\nabla u)H_{\eta_{j}}(\nabla u)  \in W^{1,2}_{loc}(\Omega).$$
\end{proof}
\begin{rem}
 The choice $k=m_A+1$ allows us to recover the optimal result in \cite{ACCFM} obtained for $f\in L^2$. It is worth noting that this choice will be applicable in all subsequent propositions, therefore, in every case, we encompass the main result in \cite{ACCFM}.
\end{rem}
\begin{prop}\label{eq:campovettoriale2}
    Let $u\in W_{loc}^{1,\mathcal A}(\Omega)$ be a weak solution of \eqref{eq:forte} and $f$ satisfying $(H_f)$. \newline 
   If $m_{A} < M_{A} < 0,$ then 
    \begin{equation}\label{eq:Ma negat}
        A(H(\nabla u))^{\frac{k-1}{m_{A}}} \, H(\nabla u)\nabla H(\nabla u)\in W^{1,2}_{loc}(\Omega,\R^{N})
    \end{equation}
    for any $\displaystyle k > \frac 12 +\frac{m_{A}}{2}.$ \newline
\end{prop}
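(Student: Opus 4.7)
The plan is to follow the scheme of Proposition \ref{eq:campovettoriale1} with the adjustments dictated by the sign change $m_A<M_A<0$. I would define $V_{\epsilon,j}$ exactly as in \eqref{eq:stress field} and reproduce verbatim the pointwise estimate
\[
|\nabla V_{\epsilon,j}|\le C\,A(H(\nabla u))^{\frac{k-1}{m_A}}|\nabla u_i|\,\chi_{\{|\nabla u|>\epsilon\}},
\]
coming from the product rule together with \eqref{eq:hp principale}, \eqref{grad H limitato} and \eqref{eq:Hess H limitato}; the constant still depends only on $|m_A|,|M_A|,k$ and the structural constants, and the handling of the $J'_\epsilon$ contribution on $\{\epsilon<|\nabla u|<2\epsilon\}$ goes through unchanged. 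Squaring, integrating on $B:=B_R(x_0)$, inserting $|u_i|^\beta/|u_i|^\beta$ and bounding $|\nabla u|^\beta\le c_1^{-\beta}H(\nabla u)^\beta$ produces the master inequality
\[
\int_B|\nabla V_{\epsilon,j}|^2\,dx\le C\int_B A(H(\nabla u))^{\frac{2(k-1)}{m_A}-1}H(\nabla u)^\beta\,\frac{A(H(\nabla u))|D^2u|^2}{|u_i|^\beta}\chi_{\{|\nabla u|>\epsilon\}}\,dx,
\]
which I would split on $H_<:=\{H(\nabla u)\le 1\}$ and $H_>:=\{H(\nabla u)\ge 1\}$.

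On $H_>$ the argument is immediate: $H(\nabla u)$ is bounded above by $c_2\|\nabla u\|_{L^\infty_{loc}}$ and, since both $m_A$ and $M_A$ are negative, \eqref{eq:hp cianchi} traps $A(H(\nabla u))$ inside a compact subinterval of $(0,+\infty)$, so the weight $A^{\frac{2(k-1)}{m_A}-1}H^\beta$ is uniformly bounded and Theorem \ref{stime derivate seconde} (with $\gamma=0$) closes this piece. The crux lies on $H_<$, where the roles of the two endpoints of \eqref{eq:hp cianchi} are reversed with respect to Proposition \ref{eq:campovettoriale1}: since $H\le 1$ and $m_A<M_A<0$, now $A(H)\le A(1)H^{m_A}$ and $A(H)\ge A(1)H^{M_A}$. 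Raising the \emph{upper} bound to the negative power $1/m_A$ flips the inequality and gives $H^\beta\le C\,A(H)^{\beta/m_A}$ for $\beta\ge 0$, collapsing the weight to $A(H)^{\alpha+\beta/m_A}$ with $\alpha:=\frac{2(k-1)}{m_A}-1$. Since $A(H)\to+\infty$ on $H_<$, boundedness requires $\alpha+\beta/m_A\le 0$, i.e., $\frac{2(k-1)+\beta}{m_A}\le 1$; multiplying by $m_A<0$ flips the inequality and, letting $\beta\to 1^-$, produces the sharp threshold $k>\frac12+\frac{m_A}{2}$, under which Theorem \ref{stime derivate seconde} again delivers a uniform integral bound.

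The $W^{1,2}(B)$-bound on $\{V_{\epsilon,j}\}$ then yields, by reflexivity, a weak limit $\tilde V_j$ along a subsequence, which by the compact embedding is also an a.e.\ limit. Off $Z_u$ one has $J_\epsilon(|\nabla u|)\to 1$, identifying $\tilde V_j=A(H(\nabla u))^{\frac{k-1}{m_A}}H(\nabla u)H_{\eta_j}(\nabla u)$ pointwise; on $Z_u$ both sides vanish, since if $k\ge 1$ then $\frac{k-1}{m_A}\le 0$ and $A(H)^{\frac{k-1}{m_A}}\to 0$ as $A\to\infty$, while if $k<1$ the bound $A(H)^{\frac{k-1}{m_A}}\le C H^{k-1}$ (from $A\le A(1)H^{m_A}$) combined with the factor $H$ gives $A(H)^{\frac{k-1}{m_A}}H\le C H^k\to 0$ because $k>\frac12+\frac{m_A}{2}>0$ by $m_A>-1$. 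The main technical point, and the only genuinely new step with respect to Proposition \ref{eq:campovettoriale1}, is exactly the $H_<$ bookkeeping in the previous paragraph: one has to recognise that it is now the upper (not the lower) bound in \eqref{eq:hp cianchi} that controls $H^\beta$ in terms of a power of $A$, and this switch is what produces the new threshold $k>\frac12+\frac{m_A}{2}$ in place of the previous $k>1+\frac{m_A}{2}-\frac{m_A}{2M_A}$.
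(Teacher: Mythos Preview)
Your argument is correct and follows the same overall scheme as the paper: the same regularised field $V_{\epsilon,j}$, the same pointwise gradient bound \eqref{eq:grad}, the same splitting $H_<\cup H_>$, the same appeal to Theorem \ref{stime derivate seconde}, and the same weak-compactness identification of the limit (where your explicit check that $A(H)^{\frac{k-1}{m_A}}H\to 0$ on $Z_u$ is more detailed than the paper's ``easy to verify'').

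The one point worth flagging is your treatment of $H_<$, which differs from the paper's. The paper distinguishes the two cases $2(k-1)-m_A\gtrless 0$ and in each case converts the power of $A$ into a power of $H$ via the appropriate side of \eqref{eq:hp cianchi} (lower bound $A\ge A(1)H^{M_A}$ when the exponent is $\le 0$, upper bound $A\le A(1)H^{m_A}$ when it is $\ge 0$), obtaining $H^{(2(k-1)-m_A)M_A/m_A+\beta}$ and $H^{2(k-1)-m_A+\beta}$ respectively. You instead go the other way, converting $H^\beta$ into $A^{\beta/m_A}$ via the single inequality $A\le A(1)H^{m_A}$, which collapses the weight to $A^{\frac{2(k-1)+\beta}{m_A}-1}$ and dispenses with the case split. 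Both routes give the same threshold $k>\tfrac12+\tfrac{m_A}{2}$; yours is marginally cleaner in avoiding the sign dichotomy, while the paper's makes the role of $M_A$ (or its irrelevance here) visible at the level of the exponent of $H$. One small wording issue: when you write ``boundedness \emph{requires} $\alpha+\beta/m_A\le 0$'', you mean ``is \emph{ensured by}'', since you are invoking an upper bound, not an equality.
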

\begin{proof}
   Consider $V_{\epsilon,j}$ in \eqref{eq:stress field}. We only prove that
   $$
      \int_{B} |\nabla V_{\epsilon,j}|^{2} \, dx <+\infty  
    $$
    Indeed the thesis follows exploiting arguments in Proposition \ref{eq:campovettoriale1} . Using \eqref{eq:grad} we already know that
    \begin{equation}\label{help}
    \begin{split}
        \int_{B} |\nabla V_{\epsilon,j}|^{2} \, dx &\le C\int_{B} A(H(\nabla u))^{\frac{2(k-1)-m_A}{m_{A}}} \frac{A(H(\nabla u)) |D^{2}u|^{2}}{|u_i|^{\beta}} |\nabla u|^{\beta}\chi_{\{|\nabla u|>\epsilon\}}  \, dx. \\
    \end{split}
    \end{equation}
    Now let us consider the two cases $2(k-1)-m_A \ge 0$ and $2(k-1)-m_A \le 0.$ 
    
    In the first, i.e. $k \ge \frac{m_A+2}{2}$, using 
\eqref{eq:hp cianchi} and $u\in C^2(\Omega\setminus Z_u)$, from Theorem \ref{stime derivate seconde} we get
    \begin{equation}\label{basta1}
        \begin{split}
            \int_{B} |\nabla V_{\epsilon,j}|^{2} \, dx &\le C \int_{B \cap H_{<}} A(H(\nabla u))^{\frac{2(k-1)-m_A}{m_{A}}} |\nabla u|^{\beta}  \frac{A(H(\nabla u)) |D^{2}u|^{2}}{|u_i|^{\beta}}\chi_{\{|\nabla u|>\epsilon\}}  \, dx \\
            &\quad + C\int_{B \cap H_{>}} A(H(\nabla u))^{\frac{2(k-1)-m_A}{m_{A}}} |\nabla u|^{\beta}  \frac{A(H(\nabla u)) |D^{2}u|^{2}}{|u_i|^{\beta}} \, dx \\
            &\le C \int_{B \cap H_{<}} H(\nabla u)^{(2(k-1)-m_A)\frac{M_A}{m_A}+\beta}\frac{A(H(\nabla u)) |D^{2}u|^{2}}{|u_i|^{\beta}} \chi_{\{|\nabla u|>\epsilon\}} \, dx+C<C
                      \end{split}
    \end{equation}
 renaming $C=C(c_1,c_2,k,K_1,K_2,m_A,M_A,\hat C,\|\nabla u\|_{L^{\infty}_{loc}(\Omega)})$ a positive constant, $\Hat{C}$ is given by Theorem ~\ref{stime derivate seconde} and 
 $$H_{<}:=\{H(\nabla u) \le 1\}, \qquad H_{>}:=\{H(\nabla u) \ge 1\}.$$
    Let us now consider $2(k-1)-m_A \le 0,$ that is $k \le \frac{m_A+2}{2}.$ 
    As in the previous case,  using \eqref{help},
\eqref{eq:hp cianchi} and since $u\in C^2(\Omega\setminus Z_u)$ we obtain
    \begin{equation}
        \begin{split}
             \int_{B} |\nabla V_{\epsilon,j}|^{2} \, dx &\le C \int_{B \cap H_{<}} A(H(\nabla u))^{\frac{2(k-1)-m_A}{m_{A}}} |\nabla u|^{\beta}  \frac{A(H(\nabla u)) |D^{2}u|^{2}}{|u_i|^{\beta}}\chi_{\{|\nabla u|>\epsilon\}}  \, dx \\
            &\quad +  C\int_{B \cap H_{>}} A(H(\nabla u))^{\frac{2(k-1)-m_A}{m_{A}}} |\nabla u|^{\beta}  \frac{A(H(\nabla u)) |D^{2}u|^{2}}{|u_i|^{\beta}} \, dx \\
            &\le C \int_{B \cap H_{<}} H(\nabla u)^{2(k-1)-m_A} |\nabla u|^{\beta} \frac{A(H(\nabla u)) |D^{2}u|^{2}}{|u_i|^{\beta}} \chi_{\{|\nabla u|>\epsilon\}} \,dx +C\\
            &\le C \int_{(B \cap H_{<})\setminus Z_u} |\nabla u|^{2(k-1)-m_A+\beta} \frac{A(H(\nabla u)) |D^{2}u|^{2}}{|u_i|^{\beta}} \, dx+C.
        \end{split}
    \end{equation} 
  From Theorem \ref{stime derivate seconde}, for $\beta$ close to $1$, 
     we get 
    \begin{equation}\label{basta}
        \int_{B} |\nabla V_{\epsilon,j}|^{2} \, dx \le C(c_2, k, K_1, K_2, m_A,M_A, \Hat{C},\|\nabla u\|_{L^{\infty}_{loc}(\Omega)})
    \end{equation}
    for any $k \in \left(\frac{m_A+1}{2},\frac{m_A+2}{2}\right]$. 
    Exploiting  \eqref{basta1} and \eqref{basta} the claim holds for any $k > \frac{m_A+1}{2}.$  
\end{proof}

    \begin{prop}\label{eq:campovettoriale3}
    Let $u\in W_{loc}^{1,\mathcal A}(\Omega)$ be a weak solution of \eqref{eq:forte}, with $f$ satisfying $(H_f)$. \newline 
 If $m_{A} < 0 < M_{A},$ then
    \begin{equation}\label{eq:discordi}
        A(H(\nabla u))^{\frac{k-1}{M_{A}}} \, H(\nabla u)\nabla H(\nabla u)\in W^{1,2}_{loc}(\Omega,\R^{N}),
    \end{equation}
    for any $\displaystyle k \in \left(\frac12+\frac{M_A}{2}, 1 +\frac {M_A}{2}-\frac{M_A}{2m_A}\right).$  
\end{prop}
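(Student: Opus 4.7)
The plan is to follow the template of Propositions \ref{eq:campovettoriale1} and \ref{eq:campovettoriale2}. For $\epsilon>0$ I set
\begin{equation*}
V_{\epsilon,j} := A(H(\nabla u))^{\frac{k-1}{M_A}}\, H(\nabla u)\, H_{\eta_j}(\nabla u)\, J_\epsilon(|\nabla u|),
\end{equation*}
with $J_\epsilon(t)=G_\epsilon(t)/t$ as in Proposition \ref{eq:campovettoriale1}. The computation leading to \eqref{eq:grad} carries over verbatim, only with $m_A$ replaced by $M_A$ in the outer exponent; it yields the pointwise bound $|\partial_{x_i} V_{\epsilon,j}| \le C\, A(H(\nabla u))^{(k-1)/M_A}\,|\nabla u_i|\, \chi_{\{|\nabla u|>\epsilon\}}$. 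Multiplying and dividing by $A(H(\nabla u))|\nabla u|^\beta/|u_i|^\beta$ exactly as in \eqref{help}, the problem reduces to bounding, uniformly in $\epsilon$,
\begin{equation*}
\int_B A(H(\nabla u))^{\frac{2(k-1)-M_A}{M_A}}\, |\nabla u|^\beta\, \frac{A(H(\nabla u))|D^2 u|^2}{|u_i|^\beta}\, \chi_{\{|\nabla u|>\epsilon\}}\,dx.
\end{equation*}

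As in the previous propositions I split $B=(B\cap H_<)\cup(B\cap H_>)$. On $B\cap H_>$ the value $H(\nabla u)$ lies in the compact interval $[1,c_2\|\nabla u\|_{L^\infty_{loc}}]$, so $A(H(\nabla u))$ and $|\nabla u|^\beta$ are uniformly bounded and the contribution is handled by Theorem \ref{stime derivate seconde} with $\gamma=0$. On $B\cap H_<$ I distinguish two subcases depending on the sign of $2(k-1)-M_A$. If $k\ge (M_A+2)/2$, the outer exponent $(2(k-1)-M_A)/M_A$ is nonnegative, so I use the upper bound $A(t)\le A(1)t^{m_A}$ from \eqref{eq:hp cianchi} (valid on $H_<$ since $m_A<0<M_A$ makes $t^{m_A}$ the maximum for $t\le 1$), obtaining
\begin{equation*}
A(H(\nabla u))^{\frac{2(k-1)-M_A}{M_A}}|\nabla u|^\beta \le C\, H(\nabla u)^{\frac{m_A(2(k-1)-M_A)}{M_A}+\beta}.
\end{equation*}
This weight stays bounded on $H_<$ whenever $\beta \ge -m_A(2(k-1)-M_A)/M_A$, and such a $\beta<1$ is available precisely when $k< 1+\frac{M_A}{2}-\frac{M_A}{2m_A}$. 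If instead $k\le (M_A+2)/2$, the outer exponent is nonpositive, and the lower bound $A(t)\ge A(1)t^{M_A}$ from \eqref{eq:hp cianchi} gives
\begin{equation*}
A(H(\nabla u))^{\frac{2(k-1)-M_A}{M_A}}|\nabla u|^\beta \le C\, H(\nabla u)^{2(k-1)-M_A+\beta}.
\end{equation*}
This is bounded on $H_<$ for $\beta\ge M_A-2(k-1)$, and admissibility with $\beta<1$ forces $k>\frac{1}{2}+\frac{M_A}{2}$.

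Gluing the two subcases together covers the whole interval $k\in\left(\frac12+\frac{M_A}{2},\,1+\frac{M_A}{2}-\frac{M_A}{2m_A}\right)$, non-empty because $M_A/m_A<0<1$. For each such $k$, choosing the associated admissible $\beta<1$ and invoking Theorem \ref{stime derivate seconde} with $\gamma=0$ produces a bound $\int_B|\nabla V_{\epsilon,j}|^2\,dx \le C$ independent of $\epsilon$, where $C$ depends in particular on the constant $\hat C$ of Theorem \ref{stime derivate seconde}. The conclusion then follows by reflexivity of $W^{1,2}_{loc}(\Omega)$ together with the a.e. convergence $V_{\epsilon,j}\to A(H(\nabla u))^{(k-1)/M_A}H(\nabla u)H_{\eta_j}(\nabla u)$ to identify the weak limit, exactly as at the end of Proposition \ref{eq:campovettoriale1}. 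I expect the main delicacy to be the two-sided bookkeeping of the $\beta$ constraints across the subcases: one must check that the upper bound on $k$ coming from the first subcase and the lower bound coming from the second interlock to form exactly the open interval in the statement, and that at the transition $k=(M_A+2)/2$ both estimates remain compatible for some $\beta<1$.
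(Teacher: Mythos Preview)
Your proposal is correct and follows essentially the same route as the paper: the same truncated field $V_{\epsilon,j}$, the same pointwise gradient bound, the same split $B=(B\cap H_<)\cup(B\cap H_>)$, and on $H_<$ the identical two-subcase analysis via \eqref{eq:hp cianchi} according to the sign of $2(k-1)-M_A$, yielding exactly the stated range of $k$. The only cosmetic difference is that on $H_>$ the paper invokes directly $u\in C^1(\Omega)\cap C^2(\Omega\setminus Z_u)$ to bound $|D^2u|$, whereas you bound the outer factor and appeal to Theorem~\ref{stime derivate seconde}; both are valid.
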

\begin{proof}    
  Here we set \begin{equation}\label{eq:stress2}
        V_{\epsilon,j} := A(H(\nabla u))^{\frac{k-1}{M_{A}}} H(\nabla u) J_{\epsilon}(|\nabla u |) H_{\eta_{j}}(\nabla u).
    \end{equation}
    By the same computations of the previous two cases we get
    \begin{equation}\label{eq:bla}
    \begin{split}
        \int_{B} |\nabla V_{\epsilon,j}|^{2} \, dx &\le C(c_2, k, K_1,K_2, M_A) \int_{B \cap H_<} A(H(\nabla u))^{\frac{2(k-1)-M_A}{M_{A}}}  A(H(\nabla u)) |D^{2}u|^{2} \chi_{\{|\nabla u|>\epsilon\}} \, dx \\
        &\quad + C\int_{B \cap H_>} A(H(\nabla u))^{\frac{2(k-1)-M_A}{M_{A}}}   A(H(\nabla u)) |D^{2}u|^{2} \, dx \\
        &=: I_1 + I_2.
    \end{split}
    \end{equation}
 The integral $I_2$ is estimable exploiting that $u\in C^1(\Omega)\cap  C^2(\Omega \setminus Z_u)$. \ \\
 Hence we only explicitly estimate $I_1$.  Suppose that $2(k-1)-M_A\ge 0$. By \eqref{eq:hp cianchi} and from Theorem \ref{stime derivate seconde}, for $\beta$ close to $1$, we have
\begin{equation}\label{eq:blabla12}
\begin{split}
    I_1 &\le C\int_{B \cap H_<} A(H(\nabla u))^{\frac{2(k-1)-M_A}{M_{A}}} |\nabla u|^{\beta} \frac{A(H(\nabla u)) |D^{2}u|^{2}}{|u_i|^{\beta}}\chi_{\{|\nabla u|>\epsilon\}}  \, dx \\
    &\le  C\int_{(B \cap H_<)\setminus Z_u} H(\nabla u)^{\beta+\frac{2(k-1)-M_A}{M_{A}}m_A}  \frac{A(H(\nabla u)) |D^{2}u|^{2}}{|u_i|^{\beta}} \ dx \\
    &\le C(c_1,c_2,k,K_1,K_2, M_A,\hat C,\|\nabla u\|_{L^{\infty}_{loc}(\Omega)}),
\end{split}
\end{equation}
for any $\displaystyle k\in \left[1+\frac{M_A}{2},1+\frac{M_A}{2}-\frac{M_A}{2m_A}\right)$, 
where $C(c_1,c_2,\hat C, k,K_1,K_2, M_A,,\|\nabla u\|_{L^{\infty}_{loc}(\Omega)})$ is a positive constant and $\hat C$ is given by Theorem \ref{stime derivate seconde}.

On the other hand, if $2(k-1)-M_A\le 0$, by \eqref{eq:hp cianchi} and from Theorem \ref{stime derivate seconde}, with $\beta$ close to $1$, we obtain
\begin{equation}\label{eq:blabla}
\begin{split}
    I_1 &\le C\int_{B \cap H_<} A(H(\nabla u))^{\frac{2(k-1)-M_A}{M_{A}}} |\nabla u|^{\beta} \frac{A(H(\nabla u)) |D^{2}u|^{2}}{|u_i|^{\beta}}\chi_{\{|\nabla u|>\epsilon\}}  \, dx \\
    &\le  C \int_{(B \cap H_<)\setminus Z_u} H(\nabla u)^{\beta+2(k-1)-M_A}  \frac{A(H(\nabla u)) |D^{2}u|^{2}}{|u_i|^{\beta}} \ dx \\
    &\le C(c_1,c_2,k,\hat C,K_1,K_2,M_A,\|\nabla u\|_{L^{\infty}_{loc}(\Omega)}),
\end{split}
\end{equation}
for any $k\displaystyle \in \left(\frac 12+\frac{M_A}{2},1+\frac{M_A}{2}\right]$, 
where $C(c_1,c_2,k,\hat C,K_1,K_2,M_A,\|\nabla u\|_{L^{\infty}_{loc}(\Omega)}),$ is a positive constant. 
By \eqref{eq:bla}, \eqref{eq:blabla12} and \eqref{eq:blabla} we get
\begin{equation}
    \begin{split}
        \int_{B} |\nabla V_{\epsilon,j}|^{2} \, dx \le C(c_1,c_2,k,\hat C,K_1,K_2,M_A,\|\nabla u\|_{L^{\infty}_{loc}(\Omega)}).
    \end{split}
\end{equation}

The thesis \eqref{eq:discordi} follows as in the previous cases.
\end{proof}
\begin{prop}\label{eq:campovettoriale4}
    Let $u\in W_{loc}^{1,\mathcal A}(\Omega)$ be a weak solution of \eqref{eq:forte} and $f$ satisfies $(H_f)$. \newline 
   If $m_{A} = 0$ and $M_{A} > 0$, then
    \begin{equation}\label{eq:unonullo}
        A(H(\nabla u))^{k-1} \, H(\nabla u)\nabla H(\nabla u) \in W^{1,2}_{loc}(\Omega,\R^{N})
    \end{equation}
    for any $\displaystyle k > \frac{3}{2}-\frac{1}{2M_A}.$
\end{prop}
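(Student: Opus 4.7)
Following the same approach as Propositions \ref{eq:campovettoriale1}--\ref{eq:campovettoriale3}, I would introduce the truncated stress field
\begin{equation*}
V_{\epsilon,j} := A(H(\nabla u))^{k-1}\, H(\nabla u)\, J_\epsilon(|\nabla u|)\, H_{\eta_j}(\nabla u),
\end{equation*}
with $J_\epsilon$ as in Proposition \ref{eq:campovettoriale1}. The exponent $k-1$ (rather than $(k-1)/m_A$) is the natural choice when $m_A = 0$, and matches the target field in \eqref{eq:unonullo}. A direct computation of $\nabla V_{\epsilon,j}$, using \eqref{eq:hp principale}, \eqref{grad H limitato}, \eqref{eq:Hess H limitato} and the cancellation $H(\nabla u)|J'_\epsilon(|\nabla u|)| \le C\, \chi_{\{|\nabla u|>\epsilon\}}$ already exploited in \eqref{eq:grad}, yields the pointwise bound
\begin{equation*}
|\nabla V_{\epsilon,j}| \le C\, A(H(\nabla u))^{k-1}\, |D^2 u|\, \chi_{\{|\nabla u|>\epsilon\}}.
\end{equation*}

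The next step is to insert $|u_i|^\beta / |u_i|^\beta$ and use $|u_i| \le |\nabla u|$ to bring out the quantity $\dfrac{A(H(\nabla u))|D^2u|^2}{|u_i|^\beta}$, whose integral is controlled by Theorem \ref{stime derivate seconde} (with $\gamma = 0$ and $\beta < 1$ arbitrarily close to $1$). What remains is to bound the prefactor $A(H(\nabla u))^{2k-3}\, |\nabla u|^\beta$. Splitting $B = (B \cap H_<) \cup (B \cap H_>)$ exactly as in the preceding propositions, the $H_>$ contribution is harmless because $u \in C^2(\Omega \setminus Z_u)$ and $|\nabla u|, A(H(\nabla u))$ are locally bounded there. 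On $H_<$, the hypotheses $m_A = 0$ and $M_A > 0$ combined with \eqref{eq:hp cianchi} give $A(1)\, H(\nabla u)^{M_A} \le A(H(\nabla u)) \le A(1)$, and two subcases on the sign of $2k-3$ must be distinguished.

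If $2k-3 \ge 0$ the factor $A(H(\nabla u))^{2k-3}$ is already uniformly bounded on $H_<$ and, by \eqref{H equiv euclidea}, so is $|\nabla u|^\beta$, so nothing further is needed. If $2k-3 < 0$, the lower bound $A(H(\nabla u)) \ge A(1)\, H(\nabla u)^{M_A}$ raised to the negative power $2k-3$ gives $A(H(\nabla u))^{2k-3} \le C\, H(\nabla u)^{M_A(2k-3)}$, so the prefactor is bounded on $H_<$ by $C\, H(\nabla u)^{M_A(2k-3)+\beta}$, which stays bounded as soon as $M_A(2k-3)+\beta \ge 0$. Letting $\beta \to 1^-$ this is precisely the sharp condition $k > \frac{3}{2} - \frac{1}{2M_A}$ stated in the proposition. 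One then obtains a uniform bound $\int_B |\nabla V_{\epsilon,j}|^2 \, dx \le C$, and the conclusion follows by the same reflexivity and a.e.\ convergence argument as in Proposition \ref{eq:campovettoriale1}: a weak limit $\tilde V_j \in W^{1,2}_{loc}(\Omega)$ exists, Rellich--Kondrachov gives pointwise convergence along a subsequence, and since $J_\epsilon(t) \to 1$ for every $t > 0$, the limit is forced to coincide with $A(H(\nabla u))^{k-1} H(\nabla u) H_{\eta_j}(\nabla u)$. The only genuinely new ingredient with respect to Propositions \ref{eq:campovettoriale1}--\ref{eq:campovettoriale3} is this case analysis on the sign of $2k-3$, which is exactly what selects the optimal threshold on $k$; every other step is a direct transcription of the preceding arguments to the degenerate regime $m_A = 0$.
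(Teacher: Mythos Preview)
Your proposal is correct and follows essentially the same approach as the paper's own proof: the same truncated field $V_{\epsilon,j}$, the same pointwise gradient bound, the same insertion of $|u_i|^\beta/|u_i|^\beta$ with the $H_</H_>$ splitting, and the same case distinction on the sign of $2(k-1)-1=2k-3$, using \eqref{eq:hp cianchi} with $m_A=0$ on $H_<$ to extract the threshold $k>\tfrac{3}{2}-\tfrac{1}{2M_A}$. The concluding weak-compactness argument is likewise identical.
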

\begin{proof}Now we prove the case \eqref{eq:unonullo}. Let us consider 
\begin{equation}\label{cmpunozero}
V_{\epsilon,j} := A(H(\nabla u))^{k-1} H(\nabla u) J_{\epsilon}(|\nabla u |) H_{\eta_{j}}(\nabla u).
\end{equation}
As we did before, we distinguish two cases. If $2(k-1)-1\ge 0$, using \eqref{eq:hp cianchi} and Theorem \ref{stime derivate seconde} we can deduce
\begin{equation}
    \begin{split}
        &\int_{B} |\nabla V_{\epsilon,j}|^{2} \ dx \le C(c_2, k,K_1,K_2,M_A) \int_{B} A(H(\nabla u))^{2(k-1)} |D^{2}u|^{2}\chi_{\{|\nabla u|>\epsilon\}}  \ dx \\
        &\le C\int_{B} A(H(\nabla u))^{2(k-1)-1} H(\nabla u)^{\beta} \frac{A(H(\nabla u)) |D^2u|^{2}}{|u_i|^{\beta}} \chi_{\{|\nabla u|>\epsilon\}} \ dx \\
        &\le C
        \int_{B \cap H_<} A(H(\nabla u))^{2(k-1)-1} H(\nabla u)^{\beta} \frac{A(H(\nabla u)) |D^2u|^{2}}{|u_i|^{\beta}}\chi_{\{|\nabla u|>\epsilon\}}  \ dx \\
        &\quad + C   \int_{B \cap H_>} A(H(\nabla u))^{2(k-1)-1} H(\nabla u)^{\beta} \frac{A(H(\nabla u)) |D^2u|^{2}}{|u_i|^{\beta}} \ dx \\
        &\le C    \int_{(B \cap H_<)\setminus Z_u} \frac{A(H(\nabla u)) |D^2u|^{2}}{|u_i|^{\beta}} \ dx +C\\
        &\le C(c_1,c_2,\hat C, k,K_1,K_2,M_A,\|\nabla u\|_{L^{\infty}_{loc}(\Omega)})
    \end{split}
\end{equation}
where $C(c_1,c_2,\hat C, k,K_1,K_2,M_A,\|\nabla u\|_{L^{\infty}_{loc}(\Omega)})$ is a positive constant.

If $2(k-1)-1\le 0$, by \eqref{eq:hp cianchi} and Theorem \ref{stime derivate seconde}, for $\beta$ close to $1$, and for any $k> \frac 32-\frac{1}{2M_A}$, we can deduce
\begin{equation}
    \begin{split}
        \int_{B} |\nabla V_{\epsilon,j}|^{2} \ dx &\le C(c_1,c_2,k,K_1, K_2, M_A)  \int_{B} A(H(\nabla u))^{2(k-1)} |D^{2}u|^{2} \ dx \\
        &\le C\int_{B} A(H(\nabla u))^{2(k-1)-1} H(\nabla u)^{\beta} \frac{A(H(\nabla u)) |D^2u|^{2}}{|u_i|^{\beta}}\chi_{\{|\nabla u|>\epsilon\}}  \ dx \\
        &\le C
        \int_{B \cap H_<} A(H(\nabla u))^{2(k-1)-1} H(\nabla u)^{\beta} \frac{A(H(\nabla u)) |D^2u|^{2}}{|u_i|^{\beta}}\chi_{\{|\nabla u|>\epsilon\}}  \ dx \\
        &\quad + C
        \int_{B \cap H_>} A(H(\nabla u))^{2(k-1)-1} H(\nabla u)^{\beta} \frac{A(H(\nabla u)) |D^2u|^{2}}{|u_i|^{\beta}} \ dx \\
        &\le C
        \int_{B \cap H_<} H(\nabla u)^{\beta+(2(k-1)-1)M_A} \frac{A(H(\nabla u)) |D^2u|^{2}}{|u_i|^{\beta}} \chi_{\{|\nabla u|>\epsilon\}} \ dx + C\\
       &\le C(c_1, c_2, \hat C, k, K_1, K_2, M_A,\|\nabla u\|_{L^{\infty}_{loc}(\Omega)})
    \end{split}
\end{equation}
where $C(c_1, c_2, \hat C, k, K_1, K_2, M_A,\|\nabla u\|_{L^{\infty}_{loc}(\Omega)})$ is a positive constant. 
\end{proof}
\begin{prop}
   Let $u\in W_{loc}^{1,\mathcal A}(\Omega)$ be a weak solution of \eqref{eq:forte} and $f$ satisfies $(H_f)$.\newline 
   If $m_{A} < 0$ and $M_{A} = 0$, then 
    \begin{equation}\label{eq:unonullo2}
        A(H(\nabla u))^{k-1} \, H(\nabla u)\nabla H(\nabla u) \in W^{1,2}_{loc}(\Omega,\R^{N})
    \end{equation} for any $\displaystyle k<\frac{3}{2}-\frac{1}{2m_A}.$
\end{prop}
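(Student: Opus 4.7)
The plan is to mirror the proof of Proposition \ref{eq:campovettoriale4} verbatim, exploiting the fact that the roles of $m_A$ and $M_A$ in \eqref{eq:hp cianchi} are interchanged on $H_<:=\{H(\nabla u)\le 1\}$ when one of them vanishes. Since $M_A=0$ and $m_A<0$, the bound \eqref{eq:hp cianchi} now yields $A(1)\le A(H(\nabla u))\le A(1)H(\nabla u)^{m_A}$ on $H_<$, so that $A(H(\nabla u))$ is bounded from below by $A(1)$ there and may blow up as $H(\nabla u)\to 0^+$ (the opposite behavior to the previous proposition). I would define the approximating stress field
\[
 V_{\epsilon,j}:=A(H(\nabla u))^{k-1}H(\nabla u)J_\epsilon(|\nabla u|)H_{\eta_j}(\nabla u)
\]
exactly as in \eqref{cmpunozero}, and reproduce \eqref{eq:grad} to obtain $|\nabla V_{\epsilon,j}|\le C\,A(H(\nabla u))^{k-1}|\nabla u_i|\chi_{\{|\nabla u|>\epsilon\}}$; this step uses only \eqref{grad H limitato}, \eqref{eq:Hess H limitato} and the uniform bound on $tA'(t)/A(t)$, none of which feels the sign of $m_A$ or $M_A$.

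Next, I would multiply and divide by $A(H(\nabla u))H(\nabla u)^\beta/|u_i|^\beta$ with $\beta\in[0,1)$ close to $1$, using $|u_i|\le c_1^{-1}H(\nabla u)$, to obtain, for $B=B_R(x_0)\subset\subset\Omega$,
\[
 \int_B|\nabla V_{\epsilon,j}|^2\,dx\le C\int_B A(H(\nabla u))^{2(k-1)-1}H(\nabla u)^\beta\,\frac{A(H(\nabla u))|D^2u|^2}{|u_i|^\beta}\chi_{\{|\nabla u|>\epsilon\}}\,dx,
\]
and split the domain into $H_<$ and $H_>$. On $H_>$, $A(H(\nabla u))$ is bounded above and $|\nabla u|\in L^\infty_{loc}$, so finiteness follows from Theorem \ref{stime derivate seconde} applied with $\gamma=0$. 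On $H_<$ I would handle the two subcases $2(k-1)-1\le 0$ and $2(k-1)-1\ge 0$ separately. If $k\le 3/2$, the bound $A(H(\nabla u))\ge A(1)$ makes $A(H(\nabla u))^{2(k-1)-1}$ bounded above by $A(1)^{2(k-1)-1}$, and Theorem \ref{stime derivate seconde} closes the estimate at once. If $3/2\le k<\tfrac 32-\tfrac{1}{2m_A}$, I would use the upper bound $A(H(\nabla u))^{2(k-1)-1}\le A(1)^{2(k-1)-1}H(\nabla u)^{m_A(2(k-1)-1)}$ from \eqref{eq:hp cianchi}; this produces on $H_<$ the weight $H(\nabla u)^{\beta+m_A(2(k-1)-1)}$, whose exponent is nonnegative exactly when $\beta\ge -m_A(2(k-1)-1)$, a condition compatible with $\beta<1$ if and only if $-m_A(2(k-1)-1)<1$, equivalently $k<\tfrac 32-\tfrac{1}{2m_A}$. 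Choosing such a $\beta$ and invoking Theorem \ref{stime derivate seconde} finishes the uniform bound.

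Finally, I would conclude via the reflexivity/a.e.-limit argument already used in Proposition \ref{eq:campovettoriale1}: the $W^{1,2}_{loc}$-bounded family $\{V_{\epsilon,j}\}$ converges weakly along a subsequence to some $\tilde V_j\in W^{1,2}_{loc}(\Omega)$, and the pointwise a.e.\ limit is $A(H(\nabla u))^{k-1}H(\nabla u)H_{\eta_j}(\nabla u)$ (well-defined a.e.\ thanks to \eqref{sigma}), yielding \eqref{eq:unonullo2}. The main obstacle is the sharp bookkeeping on $H_<$ in the subcase $k\ge 3/2$: the blow-up of $A$ near $H(\nabla u)=0$ must be absorbed by the factor $H(\nabla u)^\beta$ subject to the structural restriction $\beta<1$ from Theorem \ref{stime derivate seconde}, and the balance between these two exponents is exactly what pins down the threshold $k<\tfrac 32-\tfrac{1}{2m_A}$.
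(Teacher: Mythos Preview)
Your proposal is correct and follows essentially the same route as the paper's proof: the same approximating field $V_{\epsilon,j}$ from \eqref{cmpunozero}, the same pointwise gradient bound, the same splitting into $H_<$ and $H_>$, the same dichotomy on the sign of $2(k-1)-1$, and the same appeal to \eqref{eq:hp cianchi} together with Theorem~\ref{stime derivate seconde}. The only cosmetic difference is that on $H_>$ you should note that $A(H(\nabla u))$ is bounded both above \emph{and} below (since $H(\nabla u)\in[1,c_2\|\nabla u\|_{L^\infty(B)}]$ there and $A\in C((0,\infty))$), which is what is actually needed when $2(k-1)-1<0$.
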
    
\begin{proof}
The proof of \eqref{eq:unonullo2} is similar  to the case \eqref{eq:unonullo}.
 We take  $V_{\epsilon,j}$ as in \eqref{cmpunozero}. Let us consider first the case $2(k-1)-1 \ge 0.$ By Theorem  \ref{stime derivate seconde}, by \eqref{eq:hp cianchi} and choosing $\beta \sim 1^{-}$ we get
\begin{equation}\label{eq:ge}
\begin{split}
    \int_{B} |\nabla V_{\epsilon,j}|^{2} \ dx &\le C(c_2, k, K_1, K_2, M_A) \int_{B} A(H(\nabla u))^{2(k-1)} |D^{2}u|^{2} \ dx \\
    &\le C
    \int_{B} A(H(\nabla u))^{2(k-1)-1} H(\nabla u)^{\beta}  \frac{A(H(\nabla u)) |D^2u|^{2}}{|u_i|^{\beta}}\chi_{\{|\nabla u|>\epsilon\}}  dx \\
    &\le C
    \int_{B \cap H_<} A(H(\nabla u))^{2(k-1)-1} H(\nabla u)^{\beta}  \frac{A(H(\nabla u)) |D^2u|^{2}}{|u_i|^{\beta}}\chi_{\{|\nabla u|>\epsilon\}} dx \\
    &\quad + C\int_{B \cap H_>} A(H(\nabla u))^{2(k-1)-1} H(\nabla u)^{\beta}  \frac{A(H(\nabla u)) |D^2u|^{2}}{|u_i|^{\beta}}\ dx \\
    &\le C
    \int_{(B \cap H_<)\setminus Z_u}  H(\nabla u)^{\beta + (2(k-1)-1)m_{A}}  \frac{A(H(\nabla u)) |D^2u|^{2}}{|u_i|^{\beta}} \, dx \\
    &\le C(c_1, c_2, \Hat{C}, k, K_1, K_2, M_A, \|\nabla u\|_{L^{\infty}_{loc}(\Omega)})
\end{split}
\end{equation}
for any $k\in \left[ \frac{3}{2}, \frac{3}{2} - \frac{1}{2m_{A}} \right),$ where $C(c_1, c_2, \Hat{C}, k, K_1, K_2, M_A, \|\nabla u\|_{L^{\infty}_{loc}(\Omega)})$ is positive constant and $\Hat{C}$ is given by Theorem \ref{stime derivate seconde}. 

If $2(k-1)-1 < 0,$ by Theorem \ref{stime derivate seconde} and by \eqref{eq:hp cianchi} we get 
\begin{equation}\label{eq:le}
    \begin{split}
    \int_{B} |\nabla V_{\epsilon,j}|^{2} \ dx &\le C(c_2, k, K_1, K_2, M_A) \int_{B} A(H(\nabla u))^{2(k-1)} |D^{2}u|^{2} \ dx \\
    &\le C
    \int_{B} A(H(\nabla u))^{2(k-1)-1} H(\nabla u)^{\beta}  \frac{A(H(\nabla u)) |D^2u|^{2}}{|u_i|^{\beta}}\chi_{\{|\nabla u|>\epsilon\}}  dx \\
    &\le C
    \int_{B \cap H_<} A(H(\nabla u))^{2(k-1)-1} H(\nabla u)^{\beta}  \frac{A(H(\nabla u)) |D^2u|^{2}}{|u_i|^{\beta}}\chi_{\{|\nabla u|>\epsilon\}} dx \\
    &\quad + C \int_{B \cap H_>} A(H(\nabla u))^{2(k-1)-1} H(\nabla u)^{\beta}  \frac{A(H(\nabla u)) |D^2u|^{2}}{|u_i|^{\beta}}\ dx \\
    &\le C(c_1,c_2, \Hat{C}, k, K_1,K_2, M_A,\|\nabla u\|_{L^{\infty}_{loc}(\Omega)})
    \end{split}
\end{equation}
for any $k < \frac{3}{2},$ where $C(c_1,c_2, \Hat{C}, k, K_1,K_2, M_A,\|\nabla u\|_{L^{\infty}_{loc}(\Omega)})$ is a positive constant. Therefore by \eqref{eq:ge} and \eqref{eq:le}, proceeding as in the previous cases, \eqref{eq:unonullo2} holds for any $k < \frac{3}{2} - \frac{1}{2m_{A}}.$
\end{proof}

\section{Second-order regularity of weak solutions.}\label{sezz4}

We conclude our paper proving our main result. In order to do this, we first prove an integrability property for $A(H(\nabla u))^{-1}$. 

\begin{thm}\label{inverso del peso}
Let $u\in W_{loc}^{1,\mathcal A}(\Omega)$ be a weak solution of \eqref{eq:forte}, with $f$ satisfying $(H_f)$ and  $$f(x) \ge c(x_0,R) > 0 \quad\text{in } B_{2R}(x_0),$$ where  $B_{2R}(x_0) \subset \subset \Omega,$  $x_0 \in \Omega$ and $R>0$. Consider $y \in \Omega.$    
    
    Then, we have 
    \begin{equation}
        \int_{B_{R}(x_0)} \frac{1}{A(H(\nabla u))^{\alpha r}} \frac{1}{|x-y|^{\gamma}} \ dx  \le C
    \end{equation}
    where $ \alpha := \frac{M_A +1}{M_A}$, $M_A >0,$ $r <1$, $\gamma < N-2$ if $N \ge 3$ while $\gamma = 0$ if $N = 2$ and \newline $C = C(K_1, m_{A},M_{A}, r, R, x_0, \gamma,\sigma,\|\nabla u\|_{L^{\infty}_{loc}(\Omega)})$.
\end{thm}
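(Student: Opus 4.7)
The overall strategy adapts the classical Damascelli--Sciunzi weighted integrability (cf.\ \cite{DaSc}) for the gradient to the present Orlicz--anisotropic framework. The idea is to exploit the strict positivity of $f$ by testing the weak formulation \eqref{eq:debole} against a non-negative function that extracts, on the right-hand side, precisely the integral we want to estimate. Set $\mathbf{V}:=A(H(\nabla u))H(\nabla u)\nabla H(\nabla u)$, fix $\varphi\in C_c^\infty(B_{2R}(x_0))$ with $\varphi\equiv 1$ on $B_R(x_0)$ and $|\nabla\varphi|\le 2/R$, and consider a test function of the form
\begin{equation*}
\psi_{\varepsilon,\delta}(x) = \varphi^2(x)\, F_\varepsilon\!\big(A(H(\nabla u(x)))\big)\, T_\delta(|x-y|),
\end{equation*}
where $F_\varepsilon$ is a smooth bounded regularization of $t\mapsto t^{1-\alpha r}$ (truncated near $t=0$, so that $\psi_{\varepsilon,\delta}\in W^{1,\mathcal A}_0(B_{2R}(x_0))$) and $T_\delta$ is a regularization of $|x-y|^{-\gamma}$ analogous to \eqref{eq:Hdelta}. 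Inserting $\psi_{\varepsilon,\delta}$ into \eqref{eq:debole} and using $f\ge c(x_0,R)>0$ yields
\begin{equation*}
c(x_0,R)\int_\Omega \psi_{\varepsilon,\delta}\,dx \;\le\; \int_\Omega f\,\psi_{\varepsilon,\delta}\,dx \;=\; \int_\Omega \mathbf{V}\cdot\nabla\psi_{\varepsilon,\delta}\,dx.
\end{equation*}

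The plan is then to expand $\nabla\psi_{\varepsilon,\delta}$ into the three natural contributions (from $\nabla\varphi^2$, from $\nabla[F_\varepsilon(A(H(\nabla u)))]$, and from $\nabla T_\delta$) and to control them separately. The $\nabla\varphi^2$ and $\nabla T_\delta$ terms are handled by the elementary bound $|\mathbf{V}|\le C\,A(H(\nabla u))H(\nabla u)$ together with \eqref{eq:hp cianchi} and \eqref{grad H limitato}: the cancellation against the factor $F_\varepsilon\sim A^{1-\alpha r}$ produces a non-negative power of $H(\nabla u)$ precisely because $r<1$, while the singular factor $|x-y|^{-(\gamma+1)}$ arising from $\nabla T_\delta$ is locally integrable thanks to $\gamma<N-2$. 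The critical contribution is the middle one: a direct computation based on $\partial_{x_k} A(H(\nabla u))=A'(H(\nabla u))\sum_j H_{\eta_j}(\nabla u)u_{jk}$ gives
\begin{equation*}
\mathbf{V}\cdot\nabla A(H(\nabla u)) \;=\; A(H(\nabla u))\,H(\nabla u)\,A'(H(\nabla u))\,\langle D^2 u\,\nabla H(\nabla u),\nabla H(\nabla u)\rangle,
\end{equation*}
which, via \eqref{eq:hp principale} and \eqref{grad H limitato}, is pointwise bounded by $C\,A(H(\nabla u))^2|D^2 u|$. A weighted Young inequality then splits this middle term into a piece of the form $\int\varphi^2\, A(H(\nabla u))|D^2u|^2/(|u_i|^\beta|x-y|^\gamma)\,dx$, finite by Theorem \ref{stime derivate seconde} for $\beta<1$ close to $1$, plus a residual of the shape $\int\varphi^2\,|u_i|^\beta/(A(H(\nabla u))^{2\alpha r-1}|x-y|^\gamma)\,dx$ that still has to be controlled.

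The main technical obstacle lies in handling this residual. One must choose the Young splitting and the parameter $\beta$ so that the residual is pointwise dominated by $\theta\,A(H(\nabla u))^{-\alpha r}|x-y|^{-\gamma}$ plus a locally bounded quantity, with $\theta$ small enough to be reabsorbed into the left-hand side. The strict inequality $r<1$ is decisive here: it ensures that $A^{1-\alpha r}$ is genuinely smaller than $A^{-\alpha r}$ by a margin exploitable in Young's inequality, while the threshold $r=1$, i.e.\ $\alpha r=(M_A+1)/M_A$, is exactly the regime where the absorption breaks down. When $2\alpha r-1\le 0$ the residual is finite by inspection, since $A(H(\nabla u))$ is globally bounded on $B_{2R}(x_0)$; for larger values of $\alpha r$ one iterates the argument along a finite increasing sequence $r_0<r_1<\cdots<r$, bootstrapping each step from the weighted integrability at the previous smaller exponent. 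Finally, one lets $\varepsilon,\delta\to 0$ and invokes Fatou's lemma on the left-hand side to arrive at the desired estimate.
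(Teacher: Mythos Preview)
Your overall strategy---test the weak formulation against $\varphi^{2}\,F_{\varepsilon}\big(A(H(\nabla u))\big)\,T_{\delta}(|x-y|)$, use $f\ge c(x_0,R)>0$ to extract the desired integral on the left, and control the three pieces of $\mathbf{V}\cdot\nabla\psi_{\varepsilon,\delta}$ on the right---is exactly the paper's. (A slip: $F_{\varepsilon}$ must approximate $t\mapsto t^{-\alpha r}$, not $t^{1-\alpha r}$; your residual exponent $A^{1-2\alpha r}$ is in fact consistent with the correct choice.) Your treatment of the $\nabla\varphi^{2}$ and $\nabla T_{\delta}$ contributions also matches the paper's $I_{2}$ and $I_{3}$.

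Where you diverge is in the Young split for the critical middle term, and this is worth comparing. The paper writes
\[
\frac{A^{2}|D^{2}u|}{(G_{\frac{1}{N}}(A)+\epsilon)^{\alpha r+1}}
\;\le\;
\theta\,\frac{1}{(G_{\frac{1}{N}}(A)+\epsilon)^{\alpha r}}
\;+\;
\frac{C}{\theta}\,\frac{A^{4}|D^{2}u|^{2}}{(G_{\frac{1}{N}}(A)+\epsilon)^{\alpha r+2}},
\]
so that the first piece is \emph{precisely} $\theta I_{F}$ and is absorbed into the left-hand side, while the second, after $G_{\frac{1}{N}}(A)+\epsilon\ge A$, becomes $\int A^{2-\alpha r}|D^{2}u|^{2}\,T_{\delta}\psi^{2}$ and is bounded in one step by Theorem~\ref{stime derivate seconde} together with \eqref{eq:hp cianchi}. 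No iteration is needed. Your split instead isolates $\int A|D^{2}u|^{2}/|u_{i}|^{\beta}$ as the controllable piece, leaving a residual $\int |u_{i}|^{\beta}A^{1-2\alpha r}T_{\delta}$. This residual is \emph{not} pointwise dominated by $\theta A^{-\alpha r}$ plus a bounded term, so the absorption you describe does not work as stated. The bootstrap in $r$ that you fall back on can be made to work (one checks that passing from $r'$ to $r$ requires roughly $r'\ge 2r-1$, so finitely many steps suffice starting from $r_{0}\le M_{A}/(2(M_{A}+1))$), but it is extra machinery that the paper's choice of Young weights sidesteps entirely.
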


\begin{proof} 
     For $n \in \N$ consider $G_{\frac{1}{n}}(t)$ following \eqref{eq:Geps}. 
 Fix an arbitrary $\epsilon > 0 $, let $n_{\epsilon} \in \N$ such that for any $n > n_{\epsilon}$ it holds
  \begin{equation}\label{eq:uniforme}
    \sup_{t\geq0} |G_{\frac1n}(t) - t| < \epsilon.
 \end{equation}
Choosing $N>n_\varepsilon$, consider the following test function:
    \begin{equation}\label{eq:testpeso}
        \varphi = \frac{ H_{\delta}(|x-y|) \psi^{2}}{(G_{\frac1N}(A(H(\nabla u)))+\epsilon)^{\alpha r}},
    \end{equation}
    where $\psi$ and $H_{\delta}$ are defined in (\ref{eq:varphi}), (\ref{eq:Hdelta}).
    Then
    \begin{equation}
    \begin{split}
        \nabla \varphi = \frac{2\psi H_{\delta} \nabla \psi }{(G_{\frac1N}(A(H(\nabla u)))+\epsilon)^{\alpha r}} &+ \frac{\psi^2 \nabla H_{\delta}}{(G_{\frac1N}(A(H(\nabla u)))+\epsilon)^{\alpha r}} \\
        &- \alpha r \psi^2 H_{\delta} \frac{G'_{\frac1N}(A(H(\nabla u))) A'(H(\nabla u)) D^2u \nabla H(\nabla u)}{(G_{\frac1N}(A(H(\nabla u)))+\epsilon)^{\alpha r+1}}.
    \end{split}
    \end{equation}
    Since $f(x) \ge C(x_0,R) > 0$ in $B:=B_{2R}(x_0),$ testing $\eqref{eq:testpeso}$ in (\ref{eq:debole}) we get
    \begin{eqnarray}
        &&C(R,x_0) \int_{B} \frac{H_{\delta}(|x-y|) \psi^{2}}{(G_{\frac1N}(A(H(\nabla u)))+\epsilon)^{\alpha r}} \ dx \\ 
        \nonumber  &\le& -\alpha r \int_{B} \frac{ A(H(\nabla u)) H(\nabla u) G'_{\frac1N}(A(H(\nabla u))) A'(H(\nabla u)) \langle \nabla H (\nabla u) , D^{2}u \nabla H(\nabla u) \rangle}{\big(G_{\frac1N}(A(H(\nabla u)))+\epsilon \big)^{\alpha r+1}} H_{\delta} \psi^{2} \ dx  \\
         \nonumber &&+ \int_{B} \frac{ A(H(\nabla u)) H(\nabla u) \langle \nabla H(\nabla u), \nabla H_{\delta} \rangle}{\big(G_{\frac1N}(A(H(\nabla u)))+\epsilon \big)^{\alpha r}} \psi^{2}  \ dx + 2 \int_{\Omega} \frac{ A(H(\nabla u)) H(\nabla u) \langle \nabla H(\nabla u), \nabla \psi \rangle}{\big(G_{\frac1N}(A(H(\nabla u)))+\epsilon \big)^{\alpha r}} H_{\delta}\psi \ dx.
    \end{eqnarray}
    
    Therefore, denoting by $I_F$ the left-hand side of the previous inequality, we have
    \begin{eqnarray}\label{eq:conto}
        \nonumber I_F&:=& \int_{B} \frac{H_{\delta}(|x-y|) \psi^{2}}{(G_{\frac1N}(A(H(\nabla u)))+\epsilon)^{\alpha r}} \ dx \\
       \nonumber  &\le& C(r,R,x_0,\alpha) \int_{B} \frac{|H(\nabla u) A(H(\nabla u))  A'(H(\nabla u))| |D^{2}u|  |\nabla H (\nabla u)|^{2}}{\big(G_{\frac1N}(A(H(\nabla u)))+\epsilon \big)^{\alpha r +1}} H_{\delta} \psi^{2} \ dx  \\
        &&  + C(R,x_0) \int_{B} \frac{H(\nabla u) A(H(\nabla u)) |\nabla H(\nabla u)| |\nabla H_{\delta}|}{\big(G_{\frac1N}(A(H(\nabla u)))+\epsilon \big)^{\alpha r}} \psi^{2} \ dx  \\
        &&\nonumber  + C(R,x_0) \int_{B} \frac{H(\nabla u) A(H(\nabla u))  |\nabla H(\nabla u)| |\nabla \psi| }{\big(G_{\frac1N}(A(H(\nabla u)))+\epsilon \big)^{\alpha r}} H_{\delta}\psi \ dx\\
        \nonumber &=:& I_1 + I_2 + I_3,
    \end{eqnarray}
    where $C(r,R,x_0,\alpha)$ and $C(R,x_0)$ are positive constants.
    First, we estimate the term $I_3$. Using \eqref{eq:hp cianchi}, \eqref{grad H limitato}, \eqref{eq:uniforme} and since $\alpha =(M_A+1)/M_A$ we get:
    \begin{eqnarray}
            \nonumber I_3 &\le& C(K_1,R,x_0) \int_{B} \frac{ A(H(\nabla u)) H(\nabla u) }{\big(G_{\frac1N}(A(H(\nabla u)))+\epsilon \big)^{\alpha r}}  H_{\delta} \psi \ dx \\
            \nonumber &\le& C\int_{B \cap \{A < 1\}} \frac{ A(H(\nabla u)) H(\nabla u) }{\big(G_{\frac1N}(A(H(\nabla u)))+\epsilon \big)^{\alpha r}}  H_{\delta} \psi \ dx \\ 
            && + C\int_{B \cap \{A \ge 1\}} \frac{ A(H(\nabla u)) H(\nabla u) }{\big(G_{\frac1N}(A(H(\nabla u)))+\epsilon \big)^{\alpha r}}  H_{\delta} \psi \ dx\\
           \nonumber &\le& C \int_{B \cap \{A < 1\}} \frac{ A(H(\nabla u))^{r} H(\nabla u) }{\big(G_{\frac1N}(A(H(\nabla u)))+\epsilon \big)^{\alpha r}}  H_{\delta} \psi \ dx + \tilde{C}(K_1,r, R,x_0,\alpha,\gamma,\|\nabla u\|_{L^{\infty}_{loc}(\Omega)})\\
           \nonumber &=& C\int_{B \cap \{A < 1\}} \frac{  H(\nabla u) }{ A(H(\nabla u))  ^{(\alpha -1) r}}  H_{\delta} \psi \ dx + \tilde{C}\\
          \nonumber  &=& C\int_{B \cap \{A < 1\}} \frac{  H(\nabla u) }{ A(H(\nabla u))  ^{\frac{r}{M_A}}}  H_{\delta} \psi \ dx + \tilde{C}\\
            &\le& C \int_{B \cap \{A < 1\}} \frac{  H(\nabla u) H_{\delta} \psi}{ \min \{H(\nabla u)^{r},H(\nabla u)^{\frac{m_{A}r}{M_A}} \} }   \ dx + \tilde{C},\\
    \end{eqnarray}
    Since $r<1$ and $r \, m_{A} < M_A,$ there exists a positive constant $C=C(K_1,r, R,x_0,\alpha,\gamma,\|\nabla u\|_{L^{\infty}_{loc}(\Omega)})$ such that
    \begin{equation}\label{eq:I3peso}
        I_3 \le C(K_1, r, R, x_0,\alpha,\gamma,\|\nabla u\|_{L^{\infty}_{loc}(\Omega)}).
    \end{equation}

    By a similar computation, we get that there exists a positive constant $C$ such that 
    \begin{equation}\label{eq:I2peso}
        I_2 \le C.
    \end{equation}
    Now we estimate $I_1$; by \eqref{eq:hp principale}, \eqref{grad H limitato} and using a weighted Young inequality we get
    \begin{equation}\label{daino}
        \begin{split}
            I_{1} &\le C(K_1,r,R,x_0,\alpha) \int_{B} \frac{|A(H(\nabla u)) H(\nabla u) A'(H(\nabla u))| |D^{2}u|   \psi^{2} H_{\delta} }{\big(G_{\frac1N}(A(H(\nabla u)))+\epsilon \big)^{\alpha r + 1}} \ dx\\
            &\le C(K_1,m_A,M_A,r,R,x_0,\alpha) \int_{B} \frac{A^{2}(H(\nabla u)) |D^2u| H_{\delta} \psi^{2}}{\big(G_{\frac1N}(A(H(\nabla u)))+\epsilon \big)^{\alpha r + 1}} \ dx \\
            &\le C \left[ \theta \int_{B} \frac{\psi^{2} H_{\delta}}{\big(G_{\frac1N}(A(H(\nabla u)))+\epsilon \big)^{\alpha r}} \ dx + \frac{1}{4\theta} \int_{B} \frac{A^{4}(H(\nabla u)) |D^2u|^{2} \psi^{2} H_{\delta}}{\big(G_{\frac1N}(A(H(\nabla u)))+\epsilon \big)^{\alpha r +2}} \ dx \right]\\
            &\le C \left[\theta I_F +  \frac{1}{4\theta} \int_{B} \frac{A^{2}(H(\nabla u)) |D^2u|^{2} \psi^{2} H_{\delta}}{\big(G_{\frac1N}(A(H(\nabla u)))+\epsilon \big)^{\alpha r}} \ dx \right].
        \end{split}
    \end{equation}
    Now, by the definition of $\alpha=(M_A+1)/M_A$ and \eqref{eq:uniforme}, we get
    \begin{equation}\label{daino1}
    \begin{split}
        \int_{B} \frac{A^{2}(H(\nabla u)) |D^2u|^{2} \psi^{2} H_{\delta}}{\big(G_{\frac1N}(A(H(\nabla u)))+\epsilon \big)^{\alpha r}} \ dx \\
        &= \int_{B \cap \{A < 1\}} A(H(\nabla u)) |D^2u|^{2} \psi^{2} H_{\delta} \frac{A(H(\nabla u))}{\big(G_{\frac1N}(A(H(\nabla u)))+\epsilon \big)^{\alpha r}} \ dx \\
        &\quad + \int_{B \cap \{A \ge 1\}} A(H(\nabla u)) |D^2u|^{2} \psi^{2} H_{\delta} \frac{A(H(\nabla u))}{\big(G_{\frac1N}(A(H(\nabla u)))+\epsilon \big)^{\alpha r}} \ dx \\
        &\le \int_{B \cap \{A < 1\}} A(H(\nabla u)) |D^2u|^{2} \psi^{2} H_{\delta} \frac{1}{A(H(\nabla u))^{ \frac{r}{M_A}}} \ dx\\
        &\quad + \int_{B \cap \{A \ge 1\}} A(H(\nabla u)) |D^2u|^{2} \psi^{2} H_{\delta} \frac{A(H(\nabla u))H(\nabla u)^\sigma}{H(\nabla u)^\sigma} \ dx \\
        &\le  \frac{1}{A(1)^{\frac{r}{M_A}}} \int_{B \cap \{A < 1\}} \frac{ A(H(\nabla u)) |D^2u|^{2} \psi^{2} H_{\delta}}{\min \{H(\nabla u)^{r}, H(\nabla u)^{\frac{r m_A}{M_A}} \}} \ dx \\
        &\quad +\sup_{B}{|A(H(\nabla u))H(\nabla u)^\sigma|} \int_{B \cap \{A \ge 1\}} \frac{A(H(\nabla u)) |D^2u|^{2} \psi^{2} H_{\delta}} {H(\nabla u)^\sigma} \ dx \\ 
        &\le \hat{C} \cdot C(m_A,M_A,r,R,x_0,\gamma,\sigma,\|\nabla u\|_{L^{\infty}_{loc}(\Omega)}),\\
    \end{split} 
    \end{equation}
    where $\sigma <1$ is given in \eqref{sigma}, $C(m_A,M_A,r,R,x_0,\gamma,\sigma,\|\nabla u\|_{L^{\infty}_{loc}(\Omega)})$ is a positive constant and $\hat{C}$ is given by Theorem \ref{stime derivate seconde}. By \eqref{daino} and \eqref{daino1} we get 
    \begin{equation}\label{eq:I1peso}
        I_1 \le \theta C I_F+\frac{1}{4\theta}C(x_0,R,K_1,\alpha,r,\sigma,\gamma,m_A,M_A,\hat C,\|\nabla u\|_{L^{\infty}_{loc}(\Omega)}),
    \end{equation}
    with $C(\hat C, K_1,m_A,M_A,r,R,x_0,\alpha,\gamma, \sigma,\|\nabla u\|_{L^{\infty}_{loc}(\Omega)})$ positive constant.

Choosing $\theta$ sufficiently small such that $1-C\theta > 0,$ letting $\delta \rightarrow 0$ in \eqref{eq:conto}, by \eqref{eq:I3peso}, \eqref{eq:I2peso}, \eqref{eq:I1peso} and using Fatou's Lemma we get
    \begin{eqnarray}
        (1-C\theta) \int_{\Omega} \frac{1}{(G_{\frac1N}(A(H(\nabla u)))+\epsilon )^{\alpha r} |x-y|^{\gamma}} \psi^{2} \ dx  &\le& \Tilde{C},
    \end{eqnarray}
    where $\Tilde{C}=\tilde{C}(\hat C,K_1,m_{A},M_{A},r,R,x_0,\gamma,\sigma,\theta,\|\nabla u\|_{L^{\infty}_{loc}(\Omega)})$ is a positive constant.
    Using \eqref{eq:uniforme} and letting $\epsilon \rightarrow 0$  by Fatou Lemma we obtain
    \begin{equation}
        \int_{B_{R}(x_0)} \frac{1}{A(H(\nabla u))^{\alpha r} |x-y|^{\gamma}}  \ dx 
        \le C(\hat C,K_1,m_{A},M_{A},r,R,x_0,\gamma,\sigma,\theta,\|\nabla u\|_{L^{\infty}_{loc}(\Omega)}).
    \end{equation}
\end{proof}
 Before proving Theorem \ref{W2}, let us remark that:
 \begin{rem}\label{cianchirem}
Supposing $\displaystyle \inf_t A(t)>0$, the following proof of \eqref{eq:1} can easily simplified in \eqref{eq:primocaso2} in order to get $u\in W^{2,2}_{loc}(\Omega)$, without hypothesis on $M_A$.
\end{rem}

\begin{proof}[Proof of Theorem \ref{W2}]
    First we prove \eqref{eq:1}. Let 
    $x_0 \in \Omega$ and $R>0$ such that $B:=B_{R}(x_0) \subset \subset  \Omega.$ 
        Fix $\epsilon > 0$ and consider $G_{\epsilon}$ defined in \eqref{eq:Geps}.  We prove that
    $$ W_{\varepsilon,i} := G_{\varepsilon}(u_i) \in W^{1,2}_{loc}(\Omega),$$
uniformly in $\varepsilon$.  Let us remark that, for fixed $\varepsilon>0$,  since $u \in C^{2}(\Omega \setminus Z_u),$ and $G_{\varepsilon}(u_i)=0$ whenever $|u_i|\leq\varepsilon$ then $W_{\varepsilon,i}\in W^{1,\infty}_{loc}(\Omega)$. Moreover $\partial_{x_j}(W_{\varepsilon,i})=G'_{\varepsilon}(u_i) u_{ij}\chi_{\{|u_i|>\varepsilon\}}$ and
    \begin{equation}\label{2}
        \begin{split}
            \int_{B} \left|\nabla W_{\varepsilon,i} \right|^{2} \ dx \le 4 \int_{B \setminus Z_u} |D^2u|^{2} \ dx.
        \end{split}
    \end{equation}
Taking $\beta < 1$ we get
    \begin{equation}\label{eq:primocaso2}
    \begin{split}
         \int_{B \setminus Z_u} |D^2u|^{2} \ dx &= \int_{B \setminus Z_u} |D^2u|^{2} \frac{\min\{H(\nabla u)^{m_A}, H(\nabla u)^{M_A}\}}{|\nabla u|^{\beta}} \frac{|\nabla u|^{\beta}}{\min\{H(\nabla u)^{m_A}, H(\nabla u)^{M_A}\}} \ dx. \\
    \end{split}
    \end{equation}
    Using Theorem \ref{stime derivate seconde}, we can choose $\beta \approx 1^{-}.$ Since we are assuming $M_A < 1,$ then \eqref{eq:hp cianchi} and \eqref{H equiv euclidea} we get
    \begin{equation}\label{eq:w22}
    \begin{split}
        \int_{B \setminus Z_u}  |D^2u|^{2} \ dx &\le C(c_1,\|\nabla u\|_{L^{\infty}_{loc}(\Omega)}) \int_{B \setminus Z_u} \frac{\min\{H(\nabla u)^{m_A}, H(\nabla u)^{M_A} \}}{|\nabla u|^{\beta}} |D^2u|^{2} \ dx \\
        &\le C(c_1,\|\nabla u\|_{L^{\infty}_{loc}(\Omega)}) \int_{B \setminus Z_u} \frac{A(H(\nabla u)) |D^2u|^{2}}{|\nabla u|^{\beta}}  \ dx \le \hat{C} \cdot C(\|\nabla u\|_{L^{\infty}_{loc}(\Omega)}),
    \end{split}
    \end{equation}
    where $C(c_1,\|\nabla u\|_{L^{\infty}_{loc}(\Omega)})$ is a positive constant and  $\Hat{C}$ is given by Theorem $\ref{stime derivate seconde}$. Following the proof of Proposition \ref{eq:campovettoriale1} from \eqref{convdeb} we get the thesis.

    Now we suppose $M_A\geq 1$. 
    In this case,
      \begin{equation}
        \begin{split}
            \int_{B} \left|\nabla W_{\varepsilon,i} \right|^{q} \ dx \le 2^q \int_{B \setminus Z_u} |D^2u|^{q} \ dx.
        \end{split}
    \end{equation}    If we set $B_1:=B\cap \{H(\nabla u)<1\}$, using Holder inequality and \eqref{eq:hp cianchi} we can deduce 
    \begin{equation}
        \begin{split}
            \int_{B_1} |D^2u|^{q} \ dx &= \int_{B_1} \left( |D^2u|^{2} \right)^{\frac{q}{2}} \frac{A(H(\nabla u))^{\frac{q}{2}}}{|\nabla u|^{\beta \frac{q}{2}}} \frac{|\nabla u|^{\beta \frac{q}{2}}}{A(H(\nabla u))^{\frac{q}{2}}} \ dx \\
            &\le \left( \int_{B_1} \frac{A(H(\nabla u)) |D^2u|^{2}}{|\nabla u|^{\beta}} \ dx \right )^{\frac{q}{2}}   \left( \int_{B_1} \left( \frac{|\nabla u|^{\beta}}{A(H(\nabla u))} \right)^{\frac{q}{2-q}} \ dx \right)^{\frac{2-q}{2}} \\
            &\le C(c_1,\beta) \left( \int_{B} \frac{A(H(\nabla u)) |D^2u|^{2}}{|\nabla u|^{\beta}} \ dx \right )^{\frac{q}{2}} \left( \int_{B_1} \left( \frac{A(H(\nabla u))^{\frac{\beta}{M_A}}}{A(H(\nabla u))} \right)^{\frac{q}{2-q}} \ dx \right)^{\frac{2-q}{2}} \\
             &\le C(c_1,\beta) \left( \int_{B} \frac{A(H(\nabla u)) |D^2u|^{2}}{|\nabla u|^{\beta}} \ dx \right )^{\frac{q}{2}} \left( \int_{B_1} \frac{1}{A(H(\nabla u))^{\frac{q(M_A-\beta)}{M_A(2-q)}}}\ dx \right)^{\frac{2-q}{2}}. 
        \end{split}
    \end{equation}
    Applying Theorem \ref{stime derivate seconde} and Theorem \ref{inverso del peso} we have
    \begin{equation}\label{eq:B1}
        \int_{B_1} |D^2u|^{q} \ dx <+\infty.
    \end{equation}    On the other hand, since $u\in C^2(\Omega\setminus Z_u)$ we note that on the set  $B_2:=B\cap \{H(\nabla u)\geq1\}$, we have
\begin{equation}\label{eq:B2}
    \int_{B_2} |D^2u|^{q} \ dx <+\infty.
\end{equation}

By \eqref{eq:B1} and \eqref{eq:B2} we can conclude that $u\in W_{loc}^{2,q}(\Omega)$.
\end{proof}

\begin{center}{\bf Acknowledgements}\end{center}  
L. Muglia, and D. Vuono are members of INdAM. L. Muglia and D. Vuono are partially supported by PRIN project P2022YFAJH\_003 (Italy): Linear and nonlinear PDEs; new directions and applications.
\					
\begin{center}
 {\sc Data availability statement}\
All data generated or analyzed during this study are included in this published article.
\end{center}

					\
					
\begin{center}
{\sc Conflict of interest statement}
\
The authors declare that they have no competing interest.
\end{center}


\begin{thebibliography}
{99}
\bibitem{Ad} Adams R.A., \emph{Sobolev spaces}, Academic Press, New York, 1975.

\bibitem{ACF} Antonini C. A., Ciraolo G., Farina A., \emph{Interior regularity results for inhomogeneous anisotropic quasilinear equations,} \newblock Math. Ann. 387 (3), (2023), 1745--1776.

\bibitem{ACCFM}Antonini C. A., Cianchi A., Ciraolo G., Farina A., Maz' ya V., \emph{Global second-order estimates in anisotropic elliptic problems,} arXiv preprint (2023), arXiv preprint arXiv:2307.03052.

\bibitem{1} Avelin, B., Kuusi, T., Mingione, G., \emph{Nonlinear Calderón-Zygmund theory in the limiting case},
		\newblock Arch. Ration. Mech. Anal., 227 (2), (2018), 663--714.

\bibitem{lars1} {Balci A.K., Diening L., Weimar M.}, {\emph{Higher order Calderón-Zygmund estimates for the $p$-Laplace equation}},
\newblock Journal of Differential Equations 268 (2), (2020), 590-635.

\bibitem{BaCiDiMa} {Balci A., Cianchi A., Diening L., Maz'ya V.,} {\emph{ A pointwise differential inequality and second-order regularity for nonlinear elliptic systems,}} {Math. Ann., 383 (3-4), (2022),  1775--1824}.

\bibitem{beni} {Baratta D., Sciunzi B., Vuono D.}, {\emph{Third order estimates and the regularity of the stress field for solutions to $p$-Laplace equations}},
\newblock arXiv preprint arXiv:2402.17566

\bibitem{BrRiSo} Braides A., Riey G., Solci M., \newblock \emph{Homogenization of Penrose tilings},
\newblock C. R. Acad. Sci. Paris, Ser. I, 347, (2009), 697--700

\bibitem{CaRiSc} Castorina D., Riey G., Sciunzi B., \emph{Hopf Lemma and regularity results for quasilinear anisotropic elliptic equations,} \newblock Calc. Var. and PDE, 58, (2019), 1--18.

\bibitem{6} Cellina A., \emph{The regularity of solutions to some variational problems, including the $p$-Laplace equation for $2 \le p < 3$}, ESAIM: Control, Optimisation and Calculus of Variations, 23 (4), (2017), 1543--1553.

\bibitem{CMMS} Chirillo G., Montoro L., Muglia L., Sciunzi B., \emph{Existence and regularity for a general class of quasilinear elliptic problems involving the Hardy potential,} J. Differential Equations, 349, (2023), 1--52.

   \bibitem{CiMa11} Cianchi A., Maz’ya V., \emph{Global Lipschitz regularity for a class of quasilinear elliptic equa- tions,} 
		\newblock Comm. Partial Differential Equations, 36 (1), (2011),100--133. 
		
 \bibitem{CiMa14} Cianchi A., Maz'ya V., \emph{Global Boundedness of the Gradient for a Class of Nonlinear Elliptic Systems}, \newblock Archive for Rational Mechanics and Analysis, 212, (2014), 129--177.

\bibitem{CiMa18}
	Cianchi A., Maz' ya V., \emph{Second-order two-sided estimates in nonlinear elliptic problems}, \newblock Archive for Rational Mechanics and Analysis, 229 (2), (2018), 569--599.

\bibitem{Cma} Cianchi A., Maz'ya V., \emph{Optimal second-order regularity for the p-Laplace system}, J. Math. Pures Appl., 132 (9), (2019), 41--78.

\bibitem{7} Colombo, M., Mingione, G., \emph{Calderón-Zygmund estimates and non-uniformly elliptic operators},
		\newblock J. Funct. Anal., 270 (4), (2016), 1416--1478.
 
 \bibitem{CFV16} Cozzi M., Farina A., Valdinoci E., \emph{Monotonicity formulae and classification results for singular, degenerate, anisotropic PDEs, } 
		\newblock Advances in Math., 293, (2016), 343--384.   
		
  \bibitem{CFV17} Cozzi M., Farina A., Valdinoci E., \emph{Gradient bounds and rigidity results for singular, degenerate, anisotropic partial differential equations, } \newblock Comm. in Mathematical Physics, 331 (1), (2017),189--214. 

  
\bibitem{DaSc}
	Damascelli L., Sciunzi B.,
	\emph{ Regularity, monotonicity abd symmetry of positive solutions of $m$-Laplace equations,} \newblock  J. Differential Equations, 206 (2), (2004), 483--515.
	

\bibitem{9} De Filippis C., Mingione G., \emph{A borderline case of Calderón-Zygmund estimates for nonuniformly elliptic problems}, 
		\newblock Algebra i Analiz, 31(3), (2019), 82--115.

\bibitem{DM} De Filippis C., Mingione G., \emph{On the regularity of minima of non-autonomous functionals,} {The Journal of Geometric Analysis}, 30, (2020), 1584--1626.
		
		
\bibitem{DM2} De Filippis C., Mingione G., \emph{Regularity for double phase problems at nearly linear growth,} {Arch. Rational Mech. Anal.},  247 (5), (2023), Paper No. 85, 50 pp.
		
\bibitem{DM3} De Filippis C., Mingione G., \emph{Nonuniformly elliptic Schauder theory,} {Invent. Math.},  234 (3), (2023), 1109--1196.


\bibitem{DiB}
	DiBenedetto E., \emph{$C^{1+\alpha}$ local regularity of weak solutions of degenerate
	elliptic equations,}
	\newblock   Nonlinear Anal., 7(8), (1983), 827 -- 850.
 
\bibitem{lars2}{Diening L., Schwarzacher S.,} {\emph{Global gradient estimates for the $p(\cdot)$-Laplacian}}, { Nonlinear Anal. TMA,} 106, 70--85
 



\bibitem{12} Duzaar F., Mingione G., \emph{Gradient continuity estimates}, 
		\newblock Calc. Var. Partial Differential Equations, 39 (3-4), (2010), 379--418.
		
\bibitem{13} Duzaar F., Mingione G., \emph{Gradient estimates via non-linear potentials},
		\newblock Amer. J. Math., 133 (4), (2011), 1093--1149.

\bibitem{17} Guarnotta U., Mosconi S., \emph{A general notion of uniform ellipticity and the regularity of the stress field for elliptic equations in divergence form}, \newblock Anal. PDE, 16 (8), (2023),  1955--1988.

\bibitem{EST}
	Esposito F., Sciunzi B., Trombetta A.,
	\emph{Regularity and symmetry results for nonlinear degenerate elliptic equations,} \newblock J. Differential Equations, 336, (2022), 315-333.
	
 
 \bibitem{GT} Gilbarg D., Trudinger N.S., \emph{Elliptic partial differential equations of second order},
		\newblock {Springer, Reprint of the 1998 Edition}.
		
\bibitem{Kor} Korolev A. G., \emph{On boundedness of generalized solutions of elliptic differential equations with nonpower nonlinearities,} \newblock Mathematics of the USSR-Sbornik, 66 (1), (1990), 83.	

\bibitem{Ul} {Ladyzhenskaya O.A, Uraltseva N.N.}, {\emph{Linear and quasilinear elliptic equations}}, Academic Press, New York, 1968.

\bibitem{16} Kuusi T., Mingione G., \emph{Vectorial nonlinear potential theory}, 
		\newblock J. Eur. Math. Soc., 20 (4), 2018, 929--1004.

\bibitem{Li1} Lieberman G.M.,
	\emph{ Boundary regularity for solutions of degenerate elliptic equations,}
	\newblock Nonlinear Anal., 12 (11), (1988), 1203-1219.
	
\bibitem{Li2} Lieberman G.M., \emph{The natural generalization of the natural conditions of Ladyzhenskaya and Uraltseva for elliptic
equations,}\newblock  Comm. Partial Differential Equations, 16 (2-3), (1991), 311--361


\bibitem{23} Mingione G., \emph{Calderón-Zygmund estimates for measure data problems}.
		\newblock C. R. Math. Acad. Sci. Paris, 344 (7), (2007), 437--442.
		
		
		\bibitem{24} Mingione G., \emph{The Calderón-Zygmund theory for elliptic problems with measure data.} 
		\newblock Ann. Sc. Norm. Super. Pisa Cl. Sci. (5), 6 (2), (2007), 195--261

		
		\bibitem{MMS} Montoro L., Muglia L., Sciunzi B., \emph{Optimal second order boundary regularity for solutions to p-Laplace equations},
		\newblock ArXiv preprint arXiv:2305.08447

\bibitem{MMSV} Montoro L., Muglia L., Sciunzi B., Vuono D., \emph{Regularity and symmetry results for the vectorial $p$-Laplacian},
		\newblock Nonlinear Analysis 251 (2025): 113700.
		  
\bibitem{S1} Sciunzi B., \emph{Some results on the qualitative properties of positive solutions of quasilinear elliptic equations},
		\newblock NoDEA. Nonlinear Differential Equations and Applications, 1 4 (3-4), (2007), 315–334.
		
		\bibitem{S2} Sciunzi B., \emph{Regularity and comparison principles for p-Laplace equations with vanishing source term},
		\newblock Comm. Cont. Math., 16 (6), (2014), 1450013, 20.
		
 
\bibitem{To} Tolksdorf P.,
	\emph{Regularity for a more general class of quasilinear elliptic
	equations,}
	\newblock   J. Diff. Equations, 51, (1984), 126--150.
 
\end{thebibliography}
\end{document}